\documentclass[11pt]{amsart}

\usepackage[T1]{fontenc}
\usepackage[utf8]{inputenc}
\usepackage{lmodern}
\usepackage{amsmath, amssymb, amsthm, mathtools}
\usepackage{enumitem}
\usepackage{microtype}
\usepackage{xcolor}
\usepackage[margin=1.15in]{geometry}
\usepackage{hyperref}

\hypersetup{
    colorlinks = true,
    linkcolor = blue!55!black,
    citecolor = blue!55!black,
    urlcolor = blue!55!black
}

\numberwithin{equation}{section}

\newtheorem{theorem}{Theorem}[section]
\newtheorem{proposition}[theorem]{Proposition}
\newtheorem{lemma}[theorem]{Lemma}
\newtheorem{corollary}[theorem]{Corollary}
\newtheorem*{theorem*}{Theorem}
\theoremstyle{definition}

\newtheorem{example}[theorem]{Example}
\newtheorem{remark}[theorem]{Remark}

\newcommand{\Z}{\mathbb Z}

\newcommand{\C}{\mathbb C}

\title{Strongly Primitive Salem Growth Polynomials for Right-Angled Coxeter Groups}
\author{Mingyu Oh}

\address{Department of Mathematics, Chung-Ang University, Seoul, Republic of Korea}

\email{ohmk1228@cau.ac.kr}

\date{June 28, 2026}

\subjclass[2020]{Primary 20F55; Secondary 05C31, 05C69, 11R06}

\keywords{right-angled Coxeter groups, spherical growth series, growth polynomials, Salem numbers, clique polynomials, Lehmer's polynomial}

\begin{document}

\begin{abstract}
    We study standard spherical growth rates of right-angled Coxeter groups through the clique polynomial of the defining graph. We prove that every even degree at least four occurs as the degree of a strongly primitive Salem growth rate: for each $d\geq2$, there are infinitely many connected $K_{2d+1}$-free defining graphs whose full reciprocal-radius polynomial is an irreducible Salem polynomial of degree $2d$. We also prove independence-polynomial obstructions for prescribed Salem polynomials, including a sharp first-coefficient bound $a_1\leq -5$, and apply them to Lehmer's polynomial and its suspension multiples.
\end{abstract}

\maketitle
\tableofcontents

\section{Introduction}
A \emph{Salem number} is a real algebraic integer $\tau>1$ all of whose
Galois conjugates lie on or inside the unit circle and at least one of whose
conjugates lies on the unit circle. Its minimal polynomial is monic, reciprocal, and of even degree at least four. Salem numbers occur naturally in questions around Lehmer's problem, Coxeter growth, hyperbolic geometry, dynamics, and knot theory; see the survey of Ghate and Hironaka \cite{GhateHironaka}. The smallest known Salem number is Lehmer's number
$$\lambda_0=1.1762808\ldots,$$
the largest real zero of Lehmer's polynomial
\begin{equation}\label{eq:Lehmer}
    \mathcal{L}(x)=x^{10}+x^9-x^7-x^6-x^5-x^4-x^3+x+1.
\end{equation}
Extensive computations have not found a polynomial of smaller Mahler measure; see Mossinghoff \cite{Mossinghoff} for one such search and for background on small Mahler measures.

Salem numbers enter Coxeter theory through several different mechanisms. Floyd and Parry studied appearances of Salem numbers in Coxeter growth, especially in geometric and hyperbolic reflection settings. Floyd related growth exponents of planar Coxeter reflection groups to Pisot and Salem numbers \cite{Floyd}, while Parry studied growth series of Coxeter groups through reciprocal and antireciprocal rational functions and Salem polynomial factors \cite{Parry}. McMullen studied a different Coxeter-theoretic source, namely spectral radii of elements in the geometric representation, and proved a Lehmer-type lower bound in that setting \cite{McMullen}. The present paper concerns a more combinatorial source: standard spherical growth rates of right-angled Coxeter groups, expressed through clique polynomials of defining graphs.

This paper studies Salem numbers arising from \emph{right-angled} Coxeter groups. If $\Gamma$ is a finite graph, the associated right-angled Coxeter group is
$$W_\Gamma=\left\langle s_v (v \in V(\Gamma))\mid s_v^2=1, s_vs_w=s_ws_v\text{ whenever } \{v,w\}\in E(\Gamma)\right\rangle.$$
We use the standard generating set $S=V(\Gamma)$. Let
$$W_\Gamma(t) = \sum_{n\geq0}a_nt^n$$
be the spherical growth series with respect to $S$. The standard growth rate is the reciprocal of the radius of convergence of $W_\Gamma(t)$. Steinberg's formula \cite[Corollary 1.29]{Steinberg} specializes in the right-angled case to a formula involving only the clique numbers of $\Gamma$; see also Davis \cite[Chap. 17]{Davis}, Glover--Scott \cite{GloverScott}, and Athreya--Prasad \cite{AthreyaPrasad}. If $c_k$ is the number of $k$-cliques in $\Gamma$, with $c_0=1$, and if $r$ is the clique number of $\Gamma$, then the full reciprocal-radius growth polynomial is
\begin{equation}
    P_\Gamma(x)=\sum_{k=0}^r(-1)^kc_k(x+1)^{r-k}.
\end{equation}
In the exponential case, the growth rate is the largest positive root of the non-cancelled reciprocal-radius denominator. In the constructions of this paper there is no cancellation, because the polynomial $P_\Gamma$ is irreducible.

We call a Salem growth rate \emph{strongly primitive} if its Salem minimal polynomial is equal to the full reciprocal-radius polynomial $P_\Gamma$, not merely to a proper factor of it. Our first main result says that strongly primitive Salem growth is abundant.
\begin{theorem}\label{thm:main1}
    For every integer $d\geq2$, there are infinitely many connected $K_{2d+1}$-free graphs $\Gamma$ such that $P_\Gamma$ is an irreducible Salem polynomial of degree $2d$. Consequently every even degree $2d\geq 4$ occurs as the degree of a strongly primitive standard growth rate of a right-angled Coxeter group.
\end{theorem}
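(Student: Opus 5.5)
The plan is to build the graphs explicitly as perturbations of a complete graph and to read off $P_\Gamma$ by an additive calculus for clique counts. Take $r=2d$ and start from $K_{2d}$. For it, $c_k=\binom{2d}{k}$ and the binomial theorem gives $P_{K_{2d}}(x)=\sum_k(-1)^k\binom{2d}{k}(x+1)^{2d-k}=x^{2d}$.

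Next, attach a new vertex $v$ whose neighbourhood is a $j$-clique with $j\le 2d-1$, so the clique number stays $2d$ and the graph stays $K_{2d+1}$-free. This adds $\binom{j}{k-1}$ new $k$-cliques, and the same binomial identity shows that $P_\Gamma$ changes by
\begin{equation*}
-\,x^{j}(x+1)^{2d-1-j}.
\end{equation*}
More generally, attaching a small auxiliary graph $H$ along a clique changes $P_\Gamma$ by an explicit polynomial, built from the clique polynomial of $H$ in the variables $x$ and $x+1$.

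The first step is therefore to record a gluing lemma: if $\Gamma$ is obtained by gluing pieces along cliques, then $P_\Gamma$ is $x^{2d}$ plus a sum of explicit local corrections. Connectedness is automatic because everything hangs off the central $K_{2d}$.

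The second step is to choose a family depending on an integer parameter $n$, together with a fixed finite ``correcting gadget''. For example, attach $n$ pendant vertices to a fixed $(d-1)$-clique, or pendant paths, together with a few vertices attached to cliques of other sizes. The aim is to make $P_\Gamma=P_n$ \emph{reciprocal}. Reciprocity is a finite system of linear equations on the multiplicities of the attached pieces, including the condition $P_n(0)=1$. I would solve it by pairing corrections whose reversals match up to multiples of $x^{2d}$ and $1$, choosing the gadget so that the $n$-dependent part is itself a reciprocal polynomial such as $n\,x^{d-1}(x+1)^2$ or a similar palindromic block.

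Once $P_n$ is reciprocal, write $P_n(x)=x^dQ_n(x+x^{-1})$ with $Q_n$ monic of degree $d$. Then $P_n$ has exactly one root outside the unit circle (Salem type) if and only if $Q_n$ has one root $>2$ and $d-1$ roots in $(-2,2)$. The third step is to verify this by sign changes of $Q_n$. I would evaluate at $y=2$, where $Q_n(2)=P_n(1)<0$ for $n$ large since the corrections are negative, and at suitable points $2\cos(\pi m/d)$ or other interlacing nodes. The point is that the $n$-term dominates and forces alternating signs, giving $d-1$ roots in $(-2,2)$ plus one root beyond $2$.

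The fourth step is irreducibility. A Salem-type reciprocal polynomial factors as the Salem minimal polynomial times cyclotomic factors. So it suffices to exclude roots of unity, and this can be done by showing $Q_n$ has no factor of the form $y-2\cos(2\pi a/b)$. One way is reduction modulo a prime: pick $n$ in a fixed residue class so that $Q_n$ is irreducible mod $p$, or Eisenstein after a shift. Another way is to bound the possible cyclotomic orders by degree $2d$ and check that $P_n(\zeta)\ne0$ because $P_n(\zeta)$ is linear in $n$ with nonzero slope for each of the finitely many candidate $\zeta$, which excludes all but finitely many $n$. The second route directly yields infinitely many $n$, hence infinitely many graphs.

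Finally, since $P_\Gamma$ is irreducible there is no cancellation with the numerator of the growth series. The growth rate is therefore the Salem root of $P_\Gamma$, which proves the ``consequently'' clause.

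The main obstacle is the second and third steps together: finding a single gadget, uniform in $d$, that makes $P_n$ exactly reciprocal while keeping enough control over $Q_n$ to certify $d-1$ roots on $(-2,2)$. I would first experiment with small $d$ to guess the gadget. Then I would try to arrange $Q_n(y)=A(y)-nB(y)$ with $A,B$ fixed, $B$ of degree $d-1$ having $d-1$ simple roots in $(-2,2)$, and $A$ alternating in sign at those roots. The intermediate value theorem then gives the interlacing for all large $n$.
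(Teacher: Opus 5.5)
\textbf{There is a genuine gap.} The decisive step, an explicit family with $P_\Gamma$ reciprocal and Salem, is left undone. Worse, the building blocks you name provably cannot produce it.

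Observe that $P_\Gamma(0)=\sum_k(-1)^kc_k=1-\chi(L(\Gamma))$. Starting from $K_{2d}$ and adding pendant vertices, pendant paths, or vertices whose neighbourhoods are $j$-cliques keeps $\Gamma$ chordal. So $L(\Gamma)$ stays contractible and $P_\Gamma(0)=0$. You can see this directly in your own formulas: $x^{2d}$ and every correction $-x^j(x+1)^{2d-1-j}$ with $j\ge1$ vanish at $0$. Hence $x\mid P_\Gamma$, so $P_\Gamma$ is never reciprocal with constant term $1$, let alone irreducible. Your linear system for the multiplicities is therefore inconsistent with these gadgets.

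There is a second failure. Only $j=1$ gives a correction that is itself reciprocal in degree $2d$, since the reversal of $x^j(x+1)^{2d-1-j}$ is $x(x+1)^{2d-1-j}$. So an $n$-dependent block such as $n$ pendant vertices on a fixed $(d-1)$-clique creates a non-reciprocal imbalance growing with $n$, which no fixed gadget can cancel. To fix the constant term you would need glued pieces $H$ with $\chi(L(H))\neq1$. You never identify such a piece, and ``experiment with small $d$ to guess the gadget'' is exactly the content of the theorem.

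The missing idea is to start from a base whose clique complex is an odd-dimensional sphere. The paper uses $O_{2d}=K_{2,\dots,2}$, with $P_{O_{2d}}=(x-1)^{2d}$, which is already reciprocal with constant term $1$. It then adds $p\,b_{d,\ell}$ pairwise non-adjacent vertices with link $K_1*O_{2\ell}$. These are cones over contractible subcomplexes, so $\chi$ is unchanged. Each contributes the individually reciprocal term $-x(x-1)^{2\ell}(x+1)^{2d-2-2\ell}$.

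After passing to $y=x+x^{-1}$ and $z=(y-2)/(y+2)$, the equation becomes $4z^d=p(1-z)B_d(z)$ with $B_d(z)=\prod_{j=1}^{d-1}(z+j)$. Its simple roots $-1,\dots,-(d-1)$ yield the $d-1$ roots in $(-2,2)$ for large $p$, which is precisely the perturbation argument of your third step. Irreducibility comes from the Eisenstein criterion at $p$ after the shift $x\mapsto x+1$, rather than from excluding cyclotomic factors.

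Your root-location and irreducibility steps are reasonable in outline. But they are conditional on a reciprocal graph-realized family that your proposal does not, and with its stated gadgets cannot, supply.
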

We construct the defining graph by starting with $O_{2d}=K_{2,2,\ldots,2}$, the one-skeleton of the $2d$-dimensional cross-polytope, and then adding vertices whose links have prescribed clique polynomials. The resulting polynomial is
\begin{equation}
    P_{d,p}(x)=(x-1)^{2d}-px\sum_{\ell=0}^{d-1}b_{d,\ell}(x-1)^{2\ell}(x+1)^{2d-2-2\ell},
\end{equation}
where 
$$B_d(z)=\prod_{j=1}^{d-1}(z+j)=\sum_{\ell=0}^{d-1}b_{d,\ell}z^\ell.$$
For all sufficiently large primes $p>d-1$, the shift $P_{d,p}(y+1)$ is Eisenstein at $p$, and the roots have the Salem root location. The point of the construction is that the graph realizes a reciprocal unit polynomial by design, while the parameter $p$ separates the real roots after passing to the $x+x^{-1}$ variable.

We next turn to restrictions on which Salem polynomials can occur. For a monic polynomial $M$ of degree $D$, define
\begin{equation}
    A_M(u)=(-u)^D M\!\left(-1-\frac{1}{u}\right).
\end{equation}
If $M=P_\Gamma$, then $A_M$ is the clique polynomial of $\Gamma$, equivalently the independence polynomial of the complement graph $\overline\Gamma$. This converts the problem of realizing a Salem polynomial into a graph-theoretic problem about independence polynomials.

\begin{theorem}\label{thm:main2}
    Let
    $$M(x)=x^D+a_1x^{D-1}+\cdots+a_1x+1$$
    be a Salem minimal polynomial. If $M$ is the full reciprocal-radius polynomial $P_\Gamma$ for some finite graph $\Gamma$, then
    $$a_1\leq-5.$$
    Moreover,
    $$0<(-1)^DM(-1)\leq 2^{-a_1}.$$
    The bound on $a_1$ is sharp.
\end{theorem}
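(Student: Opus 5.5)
The first step is to translate the two coefficients in the statement into graph invariants. Expanding $P_\Gamma(x)=\sum_{k=0}^r(-1)^kc_k(x+1)^{r-k}$ with $D=r$, the coefficient of $x^{D-1}$ is $D-c_1$, so
$$a_1=D-n,\qquad n=|V(\Gamma)|.$$
At $x=-1$ only the term $k=r$ survives, so
$$(-1)^DM(-1)=c_D,$$
the number of maximum cliques. Hence the second assertion says $1\le c_D\le 2^{\,n-D}$. Positivity is immediate, since $c_D\ge1$ by definition of the clique number.

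For the upper bound I would induct on $n$. If $\Gamma$ is complete, then $n=D$ and $c_D=1$. Otherwise pick non-adjacent vertices $u,w$. Every $D$-clique misses $u$ or misses $w$, so
$$c_D(\Gamma)\le c_D(\Gamma-u)+c_D(\Gamma-w)\le 2\cdot 2^{\,n-1-D}=2^{\,n-D}.$$
A term is $0$ when deleting the vertex lowers the clique number; otherwise the induction hypothesis applies to that graph with the same $D$. This gives $0<(-1)^DM(-1)\le 2^{-a_1}$.

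For $a_1\le -5$, i.e.\ $n\ge D+5$, I would combine the Salem structure with this count. Write $M(x)=x^{m}Q(x+x^{-1})$ with $m=D/2$. Then $Q$ has one root $y_1=\tau+\tau^{-1}>2$ and $m-1$ roots in $(-2,2)$, with $\sum y_i=-a_1=n-D=:s$ and $c_D=\prod_i(y_i+2)$. Two consequences of irreducibility are useful:
\begin{itemize}
\item A cone computation gives $P_{\Gamma*v}=xP_\Gamma$. So $\Gamma$ has no dominating vertex, and more generally no factorization coming from joins.
\item $\Gamma$ contains a $K_D$, and when $s\le 4$ there are at most four vertices outside it.
\end{itemize}
Classify the vertices of a fixed maximum clique $K$ by their neighbourhood type among the $s\le4$ outside vertices. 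A vertex adjacent to all outside vertices is dominating, which is excluded. So every clique vertex misses at least one outside vertex, and $P_\Gamma$ depends only on the multiplicities of at most $2^s-1$ types. I expect this to reduce the case $s\le4$ to explicit one-parameter or few-parameter families, for which I would compute $P_\Gamma$ directly. For each family I would then show that $P_\Gamma$ either factors (e.g.\ a twin-type factor such as $(x+1)^{j}$ or a cyclotomic factor), or has $M(1)\ge 0$, or has two real roots outside the unit circle. Each of these contradicts the Salem property, for which $Q(2)<0$ and $Q(-2)\ne0$ has sign $(-1)^m$. The inequality $\prod(y_i+2)\le 2^s$ together with $y_1+2>4$ should already prune most cases, since the other factors must then have small product.

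The main obstacle is this case analysis for $s\in\{0,1,2,3,4\}$: making the type classification small enough to check by hand, and finding a uniform certificate that rules out Salem root location. I would first try to get that certificate from the sign pattern of $Q$ at $\pm2$ and from the inequality $c_D\le 2^s$. Finally, for sharpness I would exhibit a graph with $n=D+5$ whose $P_\Gamma$ is an irreducible Salem polynomial. The natural candidate is a small member of the cross-polytope construction behind Theorem~\ref{thm:main1} with $d=2$, or else a direct small search; irreducibility and root location would be verified explicitly.
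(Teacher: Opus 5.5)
Your treatment of the second assertion is correct. You get $a_1=D-n$ and $(-1)^DM(-1)=c_D$, and the induction over a non-adjacent pair proves $1\le c_D\le 2^{n-D}$. This is a clean alternative to the paper's argument, which fixes a minimum vertex cover $S$ of $H=\overline\Gamma$ and notes that each maximum independent set is determined by its trace on $S$.

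The main assertion $a_1\le-5$, however, is not proved. The case analysis for $s\le4$ is only announced, and as set up it would not close. In your type classification the clique number $D$ is free, as are up to $2^s-1$ multiplicities ($15$ for $s=4$) and the graph on the outside vertices. Nothing bounds $D$ in terms of $s$, so you do not obtain finitely many few-parameter families to check. The polynomial-side tests cannot do the pruning either. Take $x^4-4x^3+4x^2-4x+1$: it is an irreducible Salem polynomial with $a_1=-4$, $M(1)=-2<0$ and $M(-1)=14\le 2^4$. It passes every test you list (factoring, sign of $Q(\pm2)$, $c_D\le 2^s$), so it can only be excluded by showing that no graph realizes it.

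The missing idea is reciprocity in the lowest coefficients, which you never exploit. Since $P_\Gamma(0)=C_\Gamma(-1)$ and $[x]P_\Gamma=D\,C_\Gamma(-1)+C_\Gamma'(-1)$, the conditions $M(0)=1$ and $[x]M=a_1=D-n$ say exactly that $C_\Gamma(-1)=1$ and $C_\Gamma'(-1)=-n$. These are the paper's identities $I_H(-1)=1$ and $I_H'(-1)=-(D+\delta)$, which it derives from the symmetry $I_H(u)=I_H(-1-u)$.

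In your own setup, let $K$ be a maximum clique, $O$ the $s$ outside vertices, and $N(B)$ the number of vertices of $K$ adjacent to all of a clique $B\subseteq O$. Then $C_\Gamma(u)=\sum_B u^{|B|}(1+u)^{N(B)}$. At $u=-1$ only the upper sets $\{N(B)=0\}$ and $\{N(B)\le1\}$ of the clique complex of $O$ contribute. This bounds $C_\Gamma'(-1)$ below by a constant depending only on $s$; the paper gets $-4,-5,-8$ for $s=2,3,4$. Hence $n\le 4,5,8$ respectively, so $D\le 2$ for $s=2,3$. For $s=D=4$ the equality analysis forces $C_\Gamma=(1+2u)^4$, i.e.\ $P_\Gamma=(x-1)^4$. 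The cases $s=0,1$ are immediate.

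Sharpness is also not established, and your proposed witness cannot work. In that family $P_{2,p}(x)=(x-1)^4-2px(x^2+1)$, so $a_1=-4-2p\le-6$ for every $p\ge1$. The paper's witness (the complement of its graph $H$) is $O_4$ with a single pendant vertex. It gives $(x-1)^4-x(x+1)^2=x^4-5x^3+4x^2-5x+1$, which is irreducible with $x^{-2}P=y^2-5y+2$.
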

The inequality $a_1\leq -5$ is obtained by combining the inverse transform with a small vertex-cover analysis for independence polynomials. The sharpness is witnessed by the quartic Salem polynomial
$$x^4-5x^3+4x^2-5x+1,$$
which occurs as the full reciprocal-radius polynomial of a right-angled Coxeter group. We also apply the same obstruction directly to Lehmer's polynomial. It shows that $\mathcal{L}(x)$ itself is not strongly primitive. A separate vertex-cover count shows that no suspension multiple $\mathcal L(x)(x-1)^m$, $m\geq0$, is a full reciprocal-radius growth polynomial of a right-angled Coxeter group. Thus the obstruction is not only a condition on the first coefficient: for Lehmer's polynomial it also rules out all multiples obtained by joining with cross-polytope graphs.

\subsection*{Organization}
Section \ref{sec:growth} recalls the clique formula for right-angled Coxeter full reciprocal-radius growth polynomials and the relation with $h$-polynomials. Section \ref{sec:salem} gives the reciprocal $x+x^{-1}$ criterion for Salem polynomials. Section \ref{sec:construction} proves the all-even-degree construction. Section \ref{sec:obstruction} introduces the inverse transform and the elementary independence-polynomial obstruction. Section \ref{sec:firstcoefficient} proves the sharp first-coefficient obstruction $a_1\leq -5$. Section \ref{sec:lehmer} applies these obstructions to Lehmer's polynomial and its suspension multiples. Section \ref{sec:closing} gives concluding remarks.

\section{Growth polynomials from clique data}\label{sec:growth}

\subsection{The clique formula}

Let $\Gamma$ be a finite simple graph. A subset $T\subseteq V(\Gamma)$ generates a finite special subgroup of $W_\Gamma$ if and only if $T$ spans a clique in $\Gamma$. In that case $W_T\cong(\Z/2\Z)^{|T|}$, and its growth polynomial with respect to $T$ is $(1+t)^{|T|}$. The formula below is the right-angled specialization of Steinberg's growth formula \cite[Corollary~1.29]{Steinberg}; modern accounts and graph-theoretic formulations can be found in Davis \cite[Chap.~17]{Davis}, Glover--Scott \cite[Prop.~3]{GloverScott}, and Athreya--Prasad \cite[Eq.~(5)]{AthreyaPrasad}.

Let
$$c_k=c_k(\Gamma)=\#\{\text{cliques of size $k$ in $\Gamma$}\},\qquad c_0=1$$
and let $r=\omega(\Gamma)$ be the clique number of $\Gamma$, the largest $k$ with $c_k\neq0$. The clique polynomial is
$$C_\Gamma(u)=\sum_{k=0}^rc_ku^k.$$

\begin{proposition}\label{prop:SteinbergRA}
    The standard spherical growth series of $W_\Gamma$ satisfies
    \begin{equation}\label{eq:steinberg}
        \frac{1}{W_\Gamma(t^{-1})}=\sum_{k=0}^{r} c_k\frac{(-1)^k}{(1+t)^k}.
    \end{equation}
    Equivalently,
    \begin{equation}\label{eq:denominator}
        W_\Gamma(t)=\frac{(1+t)^r}{D_\Gamma(t)},\qquad D_\Gamma(t)=\sum_{k=0}^{r}(-1)^k c_k t^k(1+t)^{r-k},
    \end{equation}
    up to cancellation of common factors. The reciprocal-radius polynomial is
    \begin{equation}\label{eq:Pclique}
        P_\Gamma(x)=x^rD_\Gamma(1/x)=\sum_{k=0}^{r}(-1)^k c_k(x+1)^{r-k}.
    \end{equation}
\end{proposition}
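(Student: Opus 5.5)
We derive the identity from Steinberg's formula for Coxeter groups with finite generating set $S$ (cf. \cite[Corollary~1.29]{Steinberg}). In the form valid for infinite $W$ it reads
$$\sum_{T\subseteq S,\ W_T \text{ finite}} \frac{(-1)^{|T|}}{W_T(t^{-1})} = \frac{1}{W(t)},$$
as an identity of rational functions. Depending on the convention chosen, the variable may instead appear as $t$ with $W(t^{-1})$ on the right; we fix the normalization by checking a small case, as explained below.

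The first step is to specialize to the right-angled setting. By the discussion preceding the proposition, $W_T$ is finite exactly when $T$ spans a clique, and then $W_T\cong(\Z/2\Z)^{|T|}$ has growth polynomial $(1+t)^{|T|}$. Grouping the terms of Steinberg's sum by $k=|T|$, each $k$-clique contributes $(-1)^k/(1+t^{\pm1})^k$, and the number of such terms is $c_k$. With the convention that matches \eqref{eq:steinberg}, this yields exactly $1/W_\Gamma(t^{-1})=\sum_k c_k(-1)^k(1+t)^{-k}$.

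To pin down the convention we use two sanity checks. When $\Gamma$ is a single vertex, $W=\Z/2$ and $W(t)=1+t$. In that case the formula gives $1-1/(1+t)=t/(1+t)$, and we need this to equal $1/W(t^{-1})=t/(1+t)$, which it does. When $\Gamma$ is two non-adjacent vertices, the group is $D_\infty$ with $W(t)=(1+t)/(1-t)$. The formula gives $1-2/(1+t)=(t-1)/(t+1)$, while $1/W(t^{-1})=(1-t^{-1})/(1+t^{-1})=(t-1)/(t+1)$, again a match. The main obstacle in the whole argument is this bookkeeping of $t$ versus $t^{-1}$ in the cited form of Steinberg's formula. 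If the citation proves inconvenient, a self-contained fallback is available: the inclusion--exclusion/Möbius argument over the nerve (the flag complex of $\Gamma$), using the decomposition of each element by its descent set, which gives $\sum_{T}(-1)^{|T|}W(t)/W_T(t)=t^{\ell(w_0)}$-type terms, trivial in the infinite case.

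Next we obtain \eqref{eq:denominator}. Replace $t$ by $t^{-1}$ in \eqref{eq:steinberg}. Since $(1+t^{-1})^{-k}=t^k(1+t)^{-k}$, we get
$$\frac{1}{W_\Gamma(t)}=\sum_k(-1)^kc_k\frac{t^k}{(1+t)^k}.$$
Multiplying numerator and denominator by $(1+t)^r$ gives $W_\Gamma=(1+t)^r/D_\Gamma(t)$, possibly with common factors still present. Finally, for \eqref{eq:Pclique}, compute
$$x^rD_\Gamma(1/x)=\sum_k(-1)^kc_k\,x^{r-k}\bigl((x+1)/x\bigr)^{r-k}x^{0}.$$
Tracking the powers carefully, $x^r\cdot x^{-k}\cdot x^{-(r-k)}(x+1)^{r-k}=(x+1)^{r-k}$. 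This is exactly the claimed reciprocal-radius polynomial.
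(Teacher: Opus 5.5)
Your proposal is correct and follows the same route as the paper: cite Steinberg's formula, observe that the finite special subgroups are exactly the clique subgroups $(\Z/2\Z)^k$ with growth polynomial $(1+t)^k$, group the terms by clique size, and then clear denominators and substitute $t=1/x$. Your explicit substitution $t\mapsto t^{-1}$ before multiplying by $(1+t)^r$ is exactly the step needed to pass from \eqref{eq:steinberg} to \eqref{eq:denominator}; the paper's phrase ``multiplying by $(1+t)^r$'' compresses it, and your sketched M\"obius-inversion fallback is not needed.
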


\begin{proof}
Steinberg's formula for Coxeter systems \cite[Corollary~1.29]{Steinberg} states that
\[
   \frac{1}{W_\Gamma(t^{-1})}
   =\sum_{T\subseteq S:\ W_T\text{ finite}}\frac{(-1)^{|T|}}{W_T(t)}.
\]
In the right-angled case the finite special subgroups are exactly the clique subgroups. A clique of size $k$ contributes $(-1)^k/(1+t)^k$, and there are $c_k$ such cliques. This proves \eqref{eq:steinberg}. Multiplying by $(1+t)^r$ gives \eqref{eq:denominator}. Finally substituting $t=1/x$ and multiplying by $x^r$ gives \eqref{eq:Pclique}.
\end{proof}

\begin{remark}
    The phrase \emph{full reciprocal-radius growth polynomial} will mean the polynomial $P_\Gamma$ in \eqref{eq:Pclique}, before deleting any factor. In the main construction $P_\Gamma$ is irreducible, so no ambiguity arises.
\end{remark}

\subsection{\texorpdfstring{The $h$-polynomial form}{The h-polynomial form}}
Let $L=L(\Gamma)$ be the clique complex of $\Gamma$. Its $(k-1)$-faces are the $k$-cliques of $\Gamma$. We use the descending convention
\begin{equation}\label{eq:hpoly}
    H_L(y)=\sum_{k=0}^r c_k(y-1)^{r-k}.
\end{equation}
Equivalently, if $h_L(t)$ denotes the standard $h$-polynomial, then
$$H_L(y)=y^r h_L(1/y).$$
Then
\begin{equation}\label{eq:htransform}
    P_\Gamma(x)=(-1)^rH_L(-x).
\end{equation}
Indeed, substituting $y=-x$ into \eqref{eq:hpoly} gives
$$H_L(-x)=\sum_{k=0}^rc_k(-x-1)^{r-k}=(-1)^r\sum_{k=0}^r(-1)^kc_k(x+1)^{r-k}.$$
Thus the arithmetic of $P_\Gamma$ is the arithmetic of $H_L(-x)$.

\begin{remark}[Reciprocity and the $h$-vector]
    The appearance of reciprocal polynomials here is related to the classical reciprocity theory of Coxeter growth functions. Charney and Davis proved that if the proper nerve is an Euler sphere of the appropriate type, then the Coxeter growth function satisfies a reciprocity relation $W(t^{-1})=\pm W(t)$ \cite{CharneyDavis}. In the right-angled setting used in this paper, the full reciprocal-radius polynomial $P_\Gamma(x)=(-1)^rH_L(-x)$ is reciprocal exactly when the corresponding $h$-vector is palindromic. We do not require the nerve to be a sphere or a manifold; the construction below instead realizes reciprocal polynomials directly by a graph operation.
\end{remark}

\subsection{Joins}
If $\Gamma$ and $\Lambda$ are graphs, their join $\Gamma*\Lambda$ is obtained by adding all edges between $V(\Gamma)$ and $V(\Lambda)$. Cliques in a join are unions of cliques, hence
\begin{equation}\label{eq:cliquejoin}
    C_{\Gamma*\Lambda}(u)=C_\Gamma(u)C_\Lambda(u).
\end{equation}
Consequently,
\begin{equation}\label{eq:Pjoin}
    P_{\Gamma*\Lambda}(x)=P_\Gamma(x)P_\Lambda(x),
\end{equation}
where $P_\Gamma$, $P_\Lambda$, and $P_{\Gamma*\Lambda}$ are defined using the effective degrees $\omega(\Gamma)$, $\omega(\Lambda)$, and $\omega(\Gamma)+\omega(\Lambda)$, respectively.

In particular, let $O_m=K_{2,2,\ldots,2}$ be the complete multipartite graph with $m$ parts, each of size two. We also set $O_0$ to be the empty graph, with clique polynomial 1. Then
$$C_{O_m}(u)=(1+2u)^m, \qquad P_{O_m}(x)=(x-1)^m.$$
Joining $O_m$ therefore multiplies the full reciprocal-radius growth polynomial by $(x-1)^m$.
\section{Reciprocal polynomials and the Salem criterion}\label{sec:salem}
A polynomial $P(x)$ of degree $2d$ is \emph{reciprocal} if $P(x)=x^{2d}P(1/x)$. If $P$ is monic, reciprocal, and has constant term 1, then there is a unique monic polynomial $R(y)\in\Z[y]$ of degree $d$ such that
\begin{equation}\label{eq:Rdef}
    x^{-d}P(x)=R(x+x^{-1}).
\end{equation}
This follows from the standard identities
$$x^j+x^{-j}\in\Z[x+x^{-1}].$$
\begin{proposition}\label{prop:SalemCriterion}
    Let $P(x)\in\Z[x]$ be monic, irreducible, reciprocal, and of degree $2d\geq4$. Let $R(y)$ be defined by \eqref{eq:Rdef}. Then $P$ is a Salem polynomial if and only if $R$ has exactly one root in $(2,\infty)$ and exactly $d-1$ roots in $(-2,2)$, counted with multiplicity. 
\end{proposition}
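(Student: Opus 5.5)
The plan is to read root locations through the $2$-to-$1$ correspondence $x\mapsto y=x+x^{-1}$. Since $P$ is reciprocal with constant term $1$, its $2d$ roots come in pairs $\{\alpha,\alpha^{-1}\}$, and each pair maps to a single root $y=\alpha+\alpha^{-1}$ of $R$. Because $P(x)=x^dR(x+x^{-1})$, this pairing matches multiplicities: the roots of $R$, with multiplicity, correspond exactly to the $d$ reciprocal pairs of roots of $P$. The first step records the elementary dictionary for $y=\alpha+\alpha^{-1}$:
\begin{itemize}
\item $y\in(-2,2)$ if and only if $|\alpha|=1$ and $\alpha\neq\pm1$;
\item $y\in(2,\infty)$ if and only if $\alpha$ is real positive with $\alpha\neq1$, so the pair is $\{\tau,\tau^{-1}\}$ with $\tau>1$;
\item $y\in(-\infty,-2)$ corresponds to a negative real pair;
\item $y=\pm2$ corresponds to $\alpha=\pm1$;
\item $y\notin\mathbb R$ if and only if $\alpha$ is non-real with $|\alpha|\neq1$.
\end{itemize}
Each of these is checked by solving $\alpha^2-y\alpha+1=0$ and looking at the discriminant $y^2-4$.

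For the direction ``$\Leftarrow$'', assume $R$ has one root $y_0>2$ and $d-1$ roots in $(-2,2)$. These account for all $d$ roots of $R$. Then $P$ has the real root $\tau>1$ (taking the larger solution for $y_0$), the root $\tau^{-1}$ inside the unit disc, and $2d-2\geq2$ roots on the unit circle. Since $P$ is monic, irreducible and in $\mathbb Z[x]$, $\tau$ is an algebraic integer whose conjugates are exactly the other roots of $P$. All of them lie in the closed unit disc, and at least one lies on the circle because $d\geq2$. Hence $\tau$ is a Salem number and $P$ is its minimal polynomial.

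For the direction ``$\Rightarrow$'', let $P$ be the minimal polynomial of a Salem number $\tau$. Then $\tau^{-1}$ is also a root of $P$, by reciprocity. All other roots have $|\alpha|\le1$, and their inverses are roots too, so every root other than $\tau$ and $\tau^{-1}$ has $|\alpha|=1$. None of these equals $\pm1$, because irreducibility together with degree $\geq4$ excludes linear factors $x\mp1$. Moreover, $P$ is separable, being irreducible over $\mathbb Q$, so $\tau\neq\tau^{-1}$ accounts for exactly one pair. Applying the dictionary gives one root of $R$ at $\tau+\tau^{-1}>2$ and $d-1$ roots in $(-2,2)$.

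The only point needing care is the multiplicity bookkeeping, namely that roots of $R$ correspond bijectively, with multiplicity, to reciprocal pairs of roots of $P$. This follows from the factorization: if $R(y)=\prod_j(y-y_j)$, then
$$P(x)=x^dR(x+x^{-1})=\prod_j(x^2-y_jx+1).$$
This also shows that when $y_j=\pm2$ the factor is $(x\mp1)^2$, which irreducibility rules out.
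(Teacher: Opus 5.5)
Your proof is correct and follows the same route as the paper: translate root locations of $P$ into root locations of $R$ via $y=x+x^{-1}$, and use irreducibility to exclude the roots $\pm1$. Your version is more complete than the paper's terse argument. The factorization $P(x)=\prod_j(x^2-y_jx+1)$ makes the multiplicity count explicit, and for the ``$\Rightarrow$'' direction you spell out why reciprocity and separability force all roots other than $\tau^{\pm1}$ onto the unit circle.
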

\begin{proof}
    If $x\in\C^\times$ and $y=x+x^{-1}$, then $|x|=1$ with $x\ne\pm1$ is equivalent to $y\in(-2,2)$. If $x>1$, then $y=x+x^{-1}>2$, and the two reciprocal roots $x$ and $x^{-1}$ determine the same $y$. Thus a root $y_0>2$ of $R$ gives two positive reciprocal roots of $P$, one larger than $1$ and one smaller than $1$. A root $y_0\in(-2,2)$ gives a conjugate pair on the unit circle. Since $P$ is irreducible of degree at least 4, it has no root $\pm1$. Thus the stated locations of the roots of $R$ are equivalent to the Salem root location of $P$.
\end{proof}

\begin{corollary}\label{cor:oddgap}
    An irreducible full reciprocal-radius growth polynomial of odd degree cannot be a Salem polynomial. Hence new strongly primitive Salem growth can occur only in even effective degree.
\end{corollary}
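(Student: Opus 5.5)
The plan is to show directly that a Salem polynomial must have even degree, and then restrict this to the polynomials $P_\Gamma$. Take an irreducible polynomial $P=P_\Gamma$ and suppose it is a Salem polynomial, meaning the minimal polynomial of a Salem number $\tau$. The claim to establish is that $P$ is reciprocal and has even degree; the first assertion of the corollary then follows.

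First, reciprocity. Since $\tau$ is a root of the irreducible polynomial $P$, so is its Galois conjugate $\tau^{-1}$. Indeed, some conjugate $\beta$ lies on the unit circle. Then $\bar\beta=\beta^{-1}$ is also a conjugate, because $P$ has real coefficients. It follows that $x^{\deg P}P(1/x)$ shares a root with the irreducible $P$, so it is $\pm P$, and $P$ is reciprocal up to sign.

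Next, the roots of $P$ come in pairs. They are distinct, since $P$ is irreducible. They are closed under $x\mapsto x^{-1}$, and no root equals $\pm1$, since $P$ is irreducible of degree greater than $1$. Under this involution the roots split into pairs: the pair $\{\tau,\tau^{-1}\}$, and for each root on the unit circle the pair $\{\beta,\bar\beta\}$. There are no fixed points, so $\deg P$ is even.

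Alternatively, one can read the conclusion off Proposition \ref{prop:SalemCriterion}. That proposition presupposes degree $2d$, via the substitution $R(x+x^{-1})$ in \eqref{eq:Rdef}. The only extra input needed is that a Salem minimal polynomial is reciprocal, and this is the conjugation argument above.

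For the second sentence, recall that strongly primitive means the Salem minimal polynomial equals the full polynomial $P_\Gamma$. So its degree is the effective degree $r=\omega(\Gamma)$, and by the first part this must be even.

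The only mildly delicate point is excluding the degenerate cases $\deg P\le 2$ and roots at $\pm1$. Both are excluded by definition, since a Salem polynomial has degree at least $4$ and is irreducible of degree greater than $1$. I therefore expect no real obstacle. The corollary is essentially the standard fact, recalled in the introduction, that Salem minimal polynomials have even degree, applied to the effective degree of $P_\Gamma$.
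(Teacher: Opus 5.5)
Your proposal is correct and follows the paper's route. The paper simply cites the standard fact that a Salem minimal polynomial is reciprocal of even degree, and you supply a proof of that fact: reciprocity comes from a unit-circle conjugate $\beta$ with $\bar\beta=\beta^{-1}$, and even degree comes from pairing the roots under the fixed-point-free involution $x\mapsto x^{-1}$. Your aside about reading the conclusion off Proposition~\ref{prop:SalemCriterion} does not work, since that proposition already assumes degree $2d$, but your main argument does not depend on it.
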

\begin{proof}
    The minimal polynomial of a Salem number is reciprocal and has even degree. An irreducible polynomial of odd degree cannot be such a polynomial.
\end{proof}
\section{Proof of Theorem \ref{thm:main1}}\label{sec:construction}
\subsection{The graph construction}
Fix $d\geq 2$. Define
\begin{equation}\label{eq:Bd}
    B_d(z)=\prod_{j=1}^{d-1}(z+j)=\sum_{\ell=0}^{d-1}b_{d,\ell}z^\ell.
\end{equation}
All coefficients $b_{d,\ell}$ are positive integers, and $b_{d,0}=(d-1)!$.

Let $p$ be a positive integer. Start with $O_{2d}$. For each $\ell=0,\ldots,d-1$, add $pb_{d,\ell}$ new vertices. These new vertices are pairwise non-adjacent. Each new vertex corresponding to $\ell$ is joined to exactly the vertices of an induced subgraph of $O_{2d}$ isomorphic to
$$K_1*O_{2\ell}.$$
For $\ell=0$, this means that the new vertex is attached to a single vertex of $O_{2d}$. Such induced subgraphs exist because $O_{2d}$ has $2d$ parts and $1+2\ell\leq 2d-1$.

Let $\Gamma_{d,p}$ be the resulting graph.

\begin{lemma}\label{lem:realization}
    The graph $\Gamma_{d,p}$ is connected and $K_{2d+1}$-free. Its full reciprocal-radius growth polynomial is
    \begin{equation}\label{eq:Pdp}
        P_{\Gamma_{d,p}}(x)=(x-1)^{2d}-px\sum_{\ell=0}^{d-1}b_{d,\ell}(x-1)^{2\ell}(x+1)^{2d-2-2\ell}.
    \end{equation}
\end{lemma}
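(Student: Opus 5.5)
Connectivity and $K_{2d+1}$-freeness come straight from the construction. The core of the argument is a clique count: I will show that each added vertex contributes exactly $-px\,(x-1)^{2\ell}(x+1)^{2d-2-2\ell}$ times its multiplicity $b_{d,\ell}$ to $P_\Gamma$, by splitting the cliques of $\Gamma_{d,p}$ according to whether they contain a new vertex.

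\emph{Connectivity.} $O_{2d}$ is connected for $2d\ge 2$, and every new vertex is adjacent to at least one vertex of $O_{2d}$.

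\emph{$K_{2d+1}$-freeness and clique number.} The new vertices are pairwise non-adjacent, so a clique contains at most one of them. A clique inside $O_{2d}$ has at most $2d$ vertices, one from each part, and cliques of size $2d$ occur. A clique containing a new vertex $v$ of type $\ell$ is $\{v\}$ together with a clique of its link $K_1*O_{2\ell}$. It therefore has at most $2+2\ell\le 2d$ vertices. Hence $\omega(\Gamma_{d,p})=2d$, so $r=2d$ in \eqref{eq:Pclique}, and there is no $K_{2d+1}$.

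\emph{Clique polynomial.} By the description above,
\[
C_{\Gamma_{d,p}}(u)=(1+2u)^{2d}+\sum_{\ell=0}^{d-1}p\,b_{d,\ell}\,u\,C_{K_1*O_{2\ell}}(u),
\qquad C_{K_1*O_{2\ell}}(u)=(1+u)(1+2u)^{2\ell},
\]
using the join rule \eqref{eq:cliquejoin}.

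\emph{Passing to $P_\Gamma$.} Since $P_\Gamma(x)=\sum_k(-1)^kc_k(x+1)^{r-k}$, it equals $(x+1)^{r}\,C_\Gamma\!\bigl(-1/(x+1)\bigr)$ with $r=2d$. Each factor transforms as follows:
\begin{itemize}
\item $1+2u\mapsto (x-1)/(x+1)$;
\item $1+u\mapsto x/(x+1)$;
\item $u\mapsto -1/(x+1)$.
\end{itemize}
The cross-polytope term therefore gives $(x-1)^{2d}$. The type-$\ell$ term gives
\[
-p\,b_{d,\ell}\,(x+1)^{2d}\cdot\frac{x\,(x-1)^{2\ell}}{(x+1)^{2+2\ell}}
=-p\,b_{d,\ell}\,x\,(x-1)^{2\ell}(x+1)^{2d-2-2\ell}.
\]
The exponent $2d-2-2\ell$ is nonnegative because $\ell\le d-1$. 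Summing over $\ell$ yields \eqref{eq:Pdp}.

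The main point needing care is that $P_\Gamma$ must be computed with the effective degree $r=\omega(\Gamma)=2d$, so that the substitution is legitimate and no leftover power of $(x+1)$ appears. This is exactly why the bound $2+2\ell\le 2d$ was checked first. Everything else is a direct computation.
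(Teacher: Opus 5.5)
Your proposal is correct and follows essentially the same route as the paper: connectivity through $O_{2d}$, the clique bound $2+2\ell\le 2d$, the decomposition $C_{\Gamma_{d,p}}(u)=(1+2u)^{2d}+\sum_{\ell} p\,b_{d,\ell}\,u(1+u)(1+2u)^{2\ell}$, and the substitution $u=-1/(x+1)$ in degree $2d$. You also state explicitly that $\omega(\Gamma_{d,p})=2d$ (attained inside $O_{2d}$), which is a small clarification the paper leaves implicit when it computes ``in degree $2d$''.
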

\begin{proof}
    The graph $O_{2d}$ is connected. Every added vertex is adjacent to at least one vertex of $O_{2d}$, so $\Gamma_{d,p}$ is connected.

    The largest clique in $O_{2d}$ has size $2d$. A clique containing an added vertex corresponding to $\ell$ is obtained by adjoining that vertex to a clique in its link $K_1*O_{2\ell}$, whose clique number is $1+2\ell$. Hence such a clique has size at most $2+2\ell\leq 2d$. Since added vertices are pairwise non-adjacent, no clique contains two of them. Thus $\Gamma_{d,p}$ is $K_{2d+1}$-free.

    It remains to compute the polynomial. The base graph has
    $$C_{O_{2d}}(u)=(1+2u)^{2d}, \qquad P_{O_{2d}}(x)=(x-1)^{2d}.$$
    Adding a vertex whose link has clique polynomial $C_H(u)$ adds $uC_H(u)$ to the clique polynomial. Here
    $$C_{K_1*O_{2\ell}}(u)=(1+u)(1+2u)^{2\ell}.$$
    In degree $2d$, the contribution of one such vertex to $P$ is
    $$(x+1)^{2d}\left(-\frac{1}{x+1}\right)\left(1-\frac{1}{x+1}\right)\left(1-\frac{2}{x+1}\right)^{2\ell}=-x(x-1)^{2\ell}(x+1)^{2d-2-2\ell}.$$
    Multiplying by $pb_{d,\ell}$ and summing over $\ell$ gives \eqref{eq:Pdp}.
\end{proof}
For brevity we write
$$P_{d,p}(x):=P_{\Gamma_{d,p}}(x).$$

\begin{lemma}\label{lem:eisenstein}
    Assume that $p$ is an odd prime with $p>d-1$. Then $P_{d,p}$ is monic, reciprocal, has constant term 1, and is irreducible over $\Z$.
\end{lemma}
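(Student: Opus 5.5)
We prove the four properties directly from the formula \eqref{eq:Pdp} of Lemma \ref{lem:realization}. Set $Q(x)=\sum_{\ell=0}^{d-1}b_{d,\ell}(x-1)^{2\ell}(x+1)^{2d-2-2\ell}$, so that $P_{d,p}=(x-1)^{2d}-pxQ(x)$.

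\emph{Monic and constant term.} The polynomial $Q$ has degree $2d-2$, so $xQ$ has degree $2d-1<2d$. Hence $P_{d,p}$ is monic of degree $2d$. At $x=0$ the term $xQ$ vanishes, so $P_{d,p}(0)=1$.

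\emph{Reciprocity.} We check that $x^{2d}P_{d,p}(1/x)=P_{d,p}(x)$ term by term. First, $x^{2d}(1/x-1)^{2d}=(1-x)^{2d}=(x-1)^{2d}$. Next, for each $\ell$,
$$x^{2d}\cdot x^{-1}(1/x-1)^{2\ell}(1/x+1)^{2d-2-2\ell}=x\,(1-x)^{2\ell}(1+x)^{2d-2-2\ell}.$$
This holds because the exponents sum to $2d-2$ and $x^{2d-1}$ absorbs them, leaving one factor of $x$. Since $2\ell$ is even, $(1-x)^{2\ell}=(x-1)^{2\ell}$. So each summand of $xQ$ is fixed, and $P_{d,p}$ is reciprocal.

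\emph{Irreducibility.} We substitute $x=y+1$ and apply Eisenstein's criterion at $p$ to
$$P_{d,p}(y+1)=y^{2d}-p(y+1)\sum_{\ell}b_{d,\ell}\,y^{2\ell}(y+2)^{2d-2-2\ell}.$$
The leading coefficient is $1$, which is not divisible by $p$. Every other coefficient lies in $p\Z$, because the second term carries the factor $p$ and has degree at most $2d-1$. It remains to compute the constant term, obtained at $y=0$. Only the $\ell=0$ summand survives, giving
$$-p\,b_{d,0}\,2^{2d-2}=-p\,(d-1)!\,2^{2d-2}.$$
We need this not to be divisible by $p^2$, that is, $p\nmid (d-1)!\,2^{2d-2}$. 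Since $p$ is odd, $p\nmid 2^{2d-2}$. Since $p>d-1$ and $p$ is prime, $p$ divides none of $1,\dots,d-1$, so $p\nmid (d-1)!$. Eisenstein's criterion then shows that $P_{d,p}(y+1)$ is irreducible over $\mathbb Q$. It is monic, hence primitive, so it is irreducible over $\Z$, and the same holds for $P_{d,p}(x)$.

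The only real work is the constant-term computation; it is the step that uses the hypotheses that $p$ is odd and $p>d-1$.
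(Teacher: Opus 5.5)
Your proof is correct and follows essentially the same route as the paper. You use term-by-term reciprocity, get monicity and the constant term from the degree bound and the factor $x$, and apply Eisenstein at $p$ after the shift $x=y+1$, with constant term $-p\,(d-1)!\,2^{2d-2}$. You also spell out two details the paper leaves implicit: why $p\nmid (d-1)!$, and the passage from irreducibility over $\mathbb{Q}$ to irreducibility over $\Z$ via primitivity.
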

\begin{proof}
    The polynomial $(x-1)^{2d}$ is reciprocal of degree $2d$. Each summand
    $$x(x-1)^{2\ell}(x+1)^{2d-2-2\ell}$$
    is also reciprocal when regarded as a polynomial of degree $2d$, because
    $$x^{2d}\left( x^{-1}(x^{-1}-1)^{2\ell}(x^{-1}+1)^{2d-2-2\ell}\right)=x(x-1)^{2\ell}(x+1)^{2d-2-2\ell}.$$
    Thus $P_{d,p}$ is reciprocal. It is monic because the subtracted summands have degree at most $2d-1$, and it has constant term 1 because the subtracted summands are divisible by $x$.

    Set $x=y+1$. Then
    $$P_{d,p}(y+1)=y^{2d}-p(y+1)\sum_{\ell=0}^{d-1}b_{d,\ell}y^{2\ell}(y+2)^{2d-2-2\ell}.$$
    All non-leading coefficients are divisible by $p$. The constant term is obtained only from $\ell=0$, and equals
    $$-p\,b_{d,0}\,2^{2d-2}=-p(d-1)!2^{2d-2}.$$
    Since $p>d-1$ and $p$ is odd, this constant term is not divisible by $p^2$. Hence $P_{d,p}(y+1)$ is Eisenstein at $p$. Therefore $P_{d,p}$ is irreducible over $\Z$.
\end{proof}
\subsection{Root location}
Since $P_{d,p}$ is reciprocal, define $R_{d,p}(y)$ by
$$x^{-d}P_{d,p}(x)=R_{d,p}(x+x^{-1}).$$
Using
$$x^{-1}(x-1)^2=x+x^{-1}-2,\qquad x^{-1}(x+1)^2=x+x^{-1}+2,$$
one obtains
\begin{equation}\label{eq:Rdp}
    R_{d,p}(y)=(y-2)^d-p\sum_{\ell=0}^{d-1} b_{d,\ell}(y-2)^\ell(y+2)^{d-1-\ell}.
\end{equation}
\begin{lemma}\label{lem:rootdist}
    For every $d\geq2$, there exists $p_0(d)$ such that for every prime $p\geq p_0(d)$, the polynomial $R_{d,p}(y)$ has one root in $(2,\infty)$ and $d-1$ roots in $(-2,2)$.
\end{lemma}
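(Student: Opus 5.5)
The plan is to divide $R_{d,p}$ by $p$ and treat it as a small perturbation of the polynomial
$$S_d(y)=\sum_{\ell=0}^{d-1}b_{d,\ell}(y-2)^\ell(y+2)^{d-1-\ell},$$
so that $R_{d,p}(y)=(y-2)^d-pS_d(y)$ by \eqref{eq:Rdp}. The first step, which carries the whole argument, is to locate the roots of $S_d$ exactly. Homogenizing the definition \eqref{eq:Bd} of $B_d$ gives
$$S_d(y)=(y+2)^{d-1}B_d\!\left(\frac{y-2}{y+2}\right)=\prod_{j=1}^{d-1}\bigl((y-2)+j(y+2)\bigr)=\prod_{j=1}^{d-1}\bigl((j+1)y+2(j-1)\bigr).$$
Hence $S_d$ has degree $d-1$, positive leading coefficient $d!$, and the simple roots
$$y_j=-\frac{2(j-1)}{j+1},\qquad j=1,\dots,d-1.$$
These roots are $0,-2/3,-1,\dots$, which are pairwise distinct and all lie in $(-2,0]\subset(-2,2)$. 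Seeing that the choice of $B_d$ makes $S_d$ factor in this way is the main point of the proof. Everything after it is a routine sign-change argument.

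Next I would choose points $-2<t_0<y_{d-1}<t_1<y_{d-2}<\cdots<y_1<t_{d-1}<2$; for example $t_{d-1}=1$ works. Because the roots $y_j$ are simple, $S_d$ strictly alternates in sign along $t_0,\dots,t_{d-1}$, and each $S_d(t_i)$ is nonzero. Let $m=\min_i|S_d(t_i)|>0$. On $[-2,2]$ we have $|(y-2)^d|\le 4^d$. So if $p>4^d/m$, then $R_{d,p}(t_i)$ has the sign of $-S_d(t_i)$, and these signs alternate. The intermediate value theorem then gives at least $d-1$ roots of $R_{d,p}$ in $(t_0,t_{d-1})\subset(-2,2)$, one in each interval $(t_{i-1},t_i)$.

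For the root above $2$, I would evaluate $R_{d,p}(2)=-pS_d(2)=-p\,4^{d-1}(d-1)!<0$. Since $R_{d,p}$ is monic, $R_{d,p}(y)\to+\infty$ as $y\to\infty$, so there is a root in $(2,\infty)$. This accounts for $d$ roots, which is the full degree of $R_{d,p}$. Therefore every root is one of these, each is simple, and the stated distribution holds. I would take $p_0(d)=4^d/m+1$.

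I do not expect a genuine obstacle once the factorization of $S_d$ is in hand. The only place that needs care is checking that the separating points $t_i$ can be chosen strictly inside $(-2,2)$. This is possible because all the $y_j$ lie in $(-2,0]$.
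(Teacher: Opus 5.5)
Your proof is correct and is essentially the paper's argument written in the $y$-variable. Your factorization $S_d(y)=(y+2)^{d-1}B_d\bigl(\tfrac{y-2}{y+2}\bigr)$, with roots $y_j=2(1-j)/(1+j)$, is exactly the paper's substitution $z=(y-2)/(y+2)$, under which $y_j$ corresponds to the root $z=-j$ of $B_d$. Likewise, your sign checks at $y=2$ and $y\to\infty$ are the paper's $F_{d,p}(0)<0$ and $F_{d,p}(1)=4>0$. The only difference is that you give an explicit threshold $p_0(d)=4^d/m+1$, whereas the paper uses a limiting (uniform convergence) argument.
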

\begin{proof}
    Set
    $$z=\frac{y-2}{y+2},\qquad y=2\frac{1+z}{1-z}.$$
    Then $y>2$ corresponds to $0<z<1$, and $-2<y<2$ corresponds to $z<0$. Multiplying $R_{d,p}(y)=0$ by the nonzero factor $(1-z)^d/4^{d-1}$, we obtain the equivalent equation
    \begin{equation}\label{eq:Fpd}
        F_{d,p}(z)=4z^d-p(1-z)B_d(z)=0.
    \end{equation}
    First, $F_{d,p}(0)=-pB_d(0)<0$, while $F_{d,p}(1)=4>0$. Thus $F_{d,p}$ has at least one root in $(0,1)$ for every $p>0$.

    It remains to locate the negative roots. For $p\to\infty$, the polynomials
    $$p^{-1}F_{d,p}(z)=\frac4p z^d-(1-z)B_d(z)$$
    converge uniformly on compact subsets of $\C$ to 
    $$G_d(z)=-(1-z)B_d(z)=-(1-z)\prod_{j=1}^{d-1}(z+j).$$
    The negative roots $-1,-2,\ldots,-(d-1)$ are simple. Choose pairwise disjoint intervals
    $$I_j=(-j-\eta,-j+\eta)\subset(-\infty,0),\qquad j=1,\ldots,d-1,$$
    with $0<\eta<1/3$. Since each of these roots is simple, $G_d$ has opposite signs at the two endpoints of $I_j$. For all sufficiently large $p$, the same sign changes hold for $p^{-1}F_{d,p}$, hence for $F_{d,p}$. Therefore $F_{d,p}$ has a root in each $I_j$.

    We have found one root in $(0,1)$ and $d-1$ roots in $(-\infty,0)$. Since $F_{d,p}$ has degree $d$, these are all of its roots. Translating back to $y=2(1+z)/(1-z)$, the root in $(0,1)$ gives a root in $(2,\infty)$ and the negative roots give roots in $(-2,2)$.
\end{proof}
\begin{proof}[Proof of Theorem~\ref{thm:main1}]
    Choose an odd prime $p>d-1$ large enough for Lemma \ref{lem:rootdist}. By Lemma \ref{lem:realization}, the polynomial $P_{d,p}$ is the full reciprocal-radius growth polynomial of a connected $K_{2d+1}$-free graph. By Lemma \ref{lem:eisenstein}, it is irreducible, monic, reciprocal, and has constant term 1. By Lemma \ref{lem:rootdist} and Proposition \ref{prop:SalemCriterion}, it is a Salem polynomial of degree $2d$. There are infinitely many primes above any fixed bound, so infinitely many examples are obtained.
\end{proof}

\begin{example}[Degree four]
    For $d=2$, $B_2(z)=z+1$. Taking $p=3$ gives
    $P_{2,3}(x)=x^4-10x^3+6x^2-10x+1.$
    The associated polynomial is
    $R_{2,3}(y)=y^2-10y+4,$
    whose roots are approximately $0.4174$ and $9.5826$. Thus $P_{2,3}$ is a Salem polynomial.
\end{example}

\begin{example}[Degree six]
    For $d=3$, $B_3(z)=(z+1)(z+2)=z^2+3z+2$. The prime $p=5$ is not large enough for the root-distribution conclusion: it gives
    $$P_{3,5}(x)=x^6-36x^5-5x^4-80x^3-5x^2-36x+1$$
    and
    $$R_{3,5}(y)=y^3-36y^2-8y-8,$$
    whose one real root is approximately $36.2269$ and the other roots are non-real. Taking instead $p=19$ gives
    $$P_{3,19}(x)=x^6-120x^5-61x^4-248x^3-61x^2-120x+1$$
    and
    $$R_{3,19}(y)=y^3-120y^2-64y-8.$$
    The roots of $R_{3,19}$ are approximately
    $$-0.3310,\\ -0.2005,\\ 120.5315,$$
    so $P_{3,19}$ is a Salem polynomial.
\end{example}

\section{Independence-polynomial obstructions}\label{sec:obstruction}
The obstruction in this section translates the arithmetic realization problem into the enumerative theory of independent sets. Recall that the independence polynomial of a graph $H$ is
$$I_H(u)=\sum_{k=0}^{\alpha(H)}i_ku^k,$$
where $i_k$ is the number of independent sets of size $k$. This is a standard graph polynomial; see the survey of Levit and Mandrescu \cite{LevitMandrescu}. Since cliques in $\Gamma$ are independent sets in the complement graph $\overline\Gamma$, the clique polynomial $C_\Gamma(u)$ is exactly $I_{\overline\Gamma}(u)$.

\subsection{The inverse transform}
For a monic polynomial $M(x)$ of degree $D$, define
$$A_M(u)=(-u)^D M\!\left(-1-\frac1u\right).$$
This is an integral polynomial whenever $M\in\Z[x]$. The inverse relation is
\begin{equation}\label{eq:inverseA}
    M(x)=(x+1)^D A_M\!\left(-\frac1{x+1}\right).
\end{equation}
If $M=P_\Gamma$, then by \eqref{eq:Pclique},
\begin{equation}\label{eq:AMequalsC}
   A_M(u)=C_\Gamma(u).
\end{equation}
Equivalently, since cliques in $\Gamma$ are independent sets in the complement graph $\overline\Gamma$,
\begin{equation}\label{eq:AMequalsI}
   A_M(u)=I_{\overline\Gamma}(u).
\end{equation}
Thus every strongly primitive right-angled Coxeter full reciprocal-radius growth polynomial must pass the test that $A_M$ is a graph independence polynomial.
\subsection{Vertex cover defect}
Let
$$M(x)=x^D+a_1x^{D-1}+\cdots+a_1x+1$$
be reciprocal. Expanding $A_M$ at the linear term gives
\begin{equation}\label{eq:linearCoeffA}
    [u]A_M(u)=D-a_1.
\end{equation}
If $A_M=I_H$ for a graph $H$, then $[u]A_M=|V(H)|$. Since $\deg(A_M)=D$, the independence number of $H$ is $D$. Hence the vertex cover number of $H$ is
\begin{equation}\label{eq:delta}
    \tau(H)=|V(H)|-\alpha(H)=(D-a_1)-D=-a_1.
\end{equation}
We call $\delta=-a_1$ the defect.

The number of maximum independent sets of a graph with vertex cover number $\delta$ is at most $2^\delta$. Indeed, if $S$ is a vertex cover with $|S|=\delta$, then $V(H)\setminus S$ is independent, and every maximum independent set is determined by its intersection with $S$. Therefore there are at most $2^\delta$ such sets.

Since the coefficient of $u^D$ in $A_M$ is $(-1)^DM(-1)$, we obtain the following obstruction.

\begin{proposition}\label{prop:elementary-obstruction}
    Let
    $$M(x)=x^D+a_1x^{D-1}+\cdots+a_1x+1$$
    be a Salem minimal polynomial. If $M$ is the full reciprocal-radius polynomial $P_\Gamma$ for some graph $\Gamma$, then
    \begin{equation}\label{eq:a1condition}
        a_1\leq-2
    \end{equation}
    and
    \begin{equation}\label{eq:topbound}
        0<(-1)^DM(-1)\leq 2^{-a_1}.
    \end{equation}
\end{proposition}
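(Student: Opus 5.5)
Set $H=\overline\Gamma$, so that $A_M=C_\Gamma=I_H$ by \eqref{eq:AMequalsC} and \eqref{eq:AMequalsI}. Every remaining step is a fact about this independence polynomial.

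\emph{Degree and vertex count.} Because $M$ is monic of degree $D$, we have $P_\Gamma=M$ of degree $D$, and hence $r=\omega(\Gamma)=D$. The coefficient of $u^D$ in $A_M$ is $(-1)^DM(-1)$. In $I_H$ this coefficient is the number $i_D$ of maximum independent sets of $H$, and $i_D\ge 1$ because $\alpha(H)=D$. So $(-1)^DM(-1)\ge1>0$. The number of vertices is $|V(H)|=[u]A_M=D-a_1$ by \eqref{eq:linearCoeffA}. I will double-check this by expanding $(-u)^D M(-1-1/u)$: the coefficient of $u$ collects the term $-D$ from $x^D$ and the term $a_1$ from $x^{D-1}$, up to sign, which gives $D-a_1$. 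It follows that $\tau(H)=-a_1\ge0$, as in \eqref{eq:delta}.

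\emph{Upper bound.} Fix a minimum vertex cover $S$ with $|S|=-a_1$. Every maximum independent set is determined by its intersection with $S$, as noted before the statement. Therefore $i_D\le 2^{-a_1}$, which is \eqref{eq:topbound}.

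\emph{Proof of $a_1\le-2$.} I will rule out $\delta=-a_1\in\{0,1\}$; note that $\delta\ge0$ already, so no other cases arise.
\begin{itemize}[leftmargin=*]
\item If $\delta=0$, then $H$ is edgeless on $D$ vertices. So $\Gamma=K_D$ and $P_\Gamma=\sum_k(-1)^k\binom Dk(x+1)^{D-k}=x^D$. This is not Salem.
\item If $\delta=1$, the cover is a single vertex $v$. Then $H$ is a star $K_{1,m}$ together with isolated vertices, with $m\ge1$. Its independence polynomial is $(1+u)^{D-m}\bigl((1+u)^m+u\bigr)$, since $|V(H)|=D+1$. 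Transforming back via \eqref{eq:inverseA}, the factor $(1+u)$ becomes $x$, and $(1+u)^m+u$ becomes $x^m-(x+1)^{m-1}$.
  \begin{itemize}
  \item If $D>m$, the factor $x$ divides $M$. This contradicts irreducibility, and also the constant term being $1$.
  \item If $D=m$, then $M=x^D-(x+1)^{D-1}$. Its constant term is $-1$, not $1$, a contradiction.
  \end{itemize}
\end{itemize}
This proves \eqref{eq:a1condition}.

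The only delicate step is the $\delta=1$ case analysis; the rest is bookkeeping with the inverse transform.
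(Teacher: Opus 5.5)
Your proposal is correct and follows the paper's proof essentially step by step. You identify $A_M$ with $I_{\overline\Gamma}$, read off $\tau(H)=-a_1$ and the bound $2^{-a_1}$ on the number of maximum independent sets, and exclude $\delta=0,1$ via the edgeless and star cases, where your split into $D>m$ and $D=m$ just spells out the paper's remark that $x^s\bigl(x^k-(x+1)^{k-1}\bigr)$ cannot have constant term $1$. The only blemish is a sign slip in your check of $[u]A_M$: the contributions are $+D$ from $x^D$ and $-a_1$ from $x^{D-1}$, not $-D$ and $a_1$.
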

\begin{proof}
    If $M=P_\Gamma$, then $A_M$ is a graph independence polynomial. Hence the defect $-a_1$ is a vertex cover number and is nonnegative, so $a_1\leq0$.

    If $a_1=0$, then the complement graph has vertex cover number zero, so it is edgeless. Its independence polynomial is $(1+u)^D$. Using \eqref{eq:inverseA}, transforming back gives $M(x)=x^D$, not a Salem polynomial.

    If $a_1=-1$, then the complement graph has vertex cover number one. Such a graph is a star, possibly with isolated vertices. Hence its independence polynomial has the form
    $$(1+u)^s\left((1+u)^k+u\right)$$
    for some $s\geq0$ and $k\geq1$. Transforming back gives
    $$x^s\left(x^k-(x+1)^{k-1}\right).$$
    This polynomial is not reciprocal of even degree with constant term 1, and hence cannot be a Salem minimal polynomial. Therefore $a_1\leq-2$.

    Finally, $(-1)^DM(-1)$ is the leading coefficient of $A_M$, hence the number of maximum independent sets. It is positive and at most $2^{-a_1}$, as explained above.
\end{proof}

\section{A sharp first-coefficient obstruction}\label{sec:firstcoefficient}

Proposition \ref{prop:elementary-obstruction} uses only the linear and top coefficients of $A_M$. To prove the sharper bound needed for Theorem \ref{thm:main2}, we examine the cases in which the vertex-cover defect is $2$, $3$, or $4$. The outcome is sharp: if the full reciprocal-radius polynomial itself is an irreducible Salem polynomial, then the coefficient of $x^{D-1}$ is at most $-5$, and this bound is attained already in degree four.

Let
$$M(x)=x^D+a_1x^{D-1}+\cdots+a_1x+1$$
be monic and reciprocal. Assume that $M=P_\Gamma$, and put $H=\overline\Gamma$. Then
$$A_M(u)=I_H(u),$$
and, as in \eqref{eq:delta}, the vertex cover number of $H$ is
$$\delta=\tau(H)=-a_1.$$
The reciprocity of $M$ becomes a symmetry of $I_H$. Indeed, under the change of variables
$$u=-\frac{1}{x+1},$$
the involution $x\mapsto x^{-1}$ sends $u$ to $-1-u$. Hence
\begin{equation}\label{eq:ind-symmetry}
    M(x)=x^D M(x^{-1})\quad\Longleftrightarrow\quad I_H(u)=I_H(-1-u).
\end{equation}
The symmetry will be combined with the assumption that $H$ has a small vertex cover.

Let $S\subset V(H)$ be a minimum vertex cover, so that $|S|=\delta$, and put
$$T=V(H)\setminus S.$$
Then $T$ is an independent set. Since $\alpha(H)=D$ and $|V(H)|=D+\delta$, we have $|T|=D$. For an independent set $A\subseteq S$, define
$$q(A)=\#\{t\in T: t\text{ is adjacent in }H\text{ to no vertex of }A\}.$$
Every independent set of $H$ is obtained uniquely by first choosing an independent set $A\subseteq S$, and then choosing an arbitrary subset of the $q(A)$ allowed vertices in $T$. Therefore
\begin{equation}\label{eq:ind-cover-decomposition}
   I_H(u)=\sum_{A\in \operatorname{Ind}(H[S])}u^{|A|}(1+u)^{q(A)}.
\end{equation}
Since $T$ is a maximum independent set, we also have
\begin{equation}\label{eq:q-size-bound}
   q(A)+|A|\leq D
\end{equation}
for every independent set $A\subseteq S$.

Define two upper sets in the independence complex $\operatorname{Ind}(H[S])$:
$$Z=\{A:q(A)=0\}, \qquad U=\{A:q(A)\leq1\}. $$
They are upper sets because $A\subseteq B$ implies $q(A)\geq q(B)$. From \eqref{eq:ind-symmetry} we get
$$I_H(-1)=I_H(0)=1,\qquad I'_H(-1)=-I'_H(0)=-(D+\delta).$$
Only the terms with $q(A)=0$ contribute to $I_H(-1)$, while only the terms with $q(A)=0$ or $q(A)=1$ contribute to $I'_H(-1)$. Hence
\begin{equation}\label{eq:Z-euler}
   \sum_{A\in Z}(-1)^{|A|}=1
\end{equation}
and
\begin{equation}\label{eq:derivative-ZU}
   I'_H(-1)=\sum_{A\in Z}|A|(-1)^{|A|-1}+\sum_{A\in U\setminus Z}(-1)^{|A|}.
\end{equation}

\begin{lemma}\label{lem:small-defect-derivative}
Under the assumptions above, if $\delta=2,3,4$, then respectively
$$I'_H(-1)\geq -4,\qquad I'_H(-1)\geq -5,\qquad I'_H(-1)\geq -8.$$
Moreover, in the case $\delta=4$, equality can occur only if $\operatorname{Ind}(H[S])$ is the full simplex on four vertices, $Z$ consists only of the four-element set $S$, and $U$ consists of $S$ together with the four three-element subsets of $S$.
\end{lemma}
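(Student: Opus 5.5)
The plan is to bound the right-hand side of \eqref{eq:derivative-ZU} term by term and then enumerate the admissible configurations $(Z,U)$. These live inside the flag complex $\operatorname{Ind}(H[S])$ on the $\delta\le 4$ vertices of $S$. Rewrite \eqref{eq:derivative-ZU} as
$$I'_H(-1)=\sum_{A\in Z}(-1)^{|A|-1}|A|+\sum_{A\in U\setminus Z}(-1)^{|A|}.$$
The negative contributions come only from even-size sets in $Z$, each contributing $-|A|$, and from odd-size sets in $U\setminus Z$, each contributing $-1$. The constraints to exploit are the following.
\begin{enumerate}[label=(\roman*)]
\item $Z\subseteq U$ are upper sets in a flag complex on $\delta$ vertices.
\item $\emptyset\notin U$, since $q(\emptyset)=|T|=D\geq 4$.
\item The Euler condition \eqref{eq:Z-euler} holds: $Z$ has exactly one more even-size set than odd-size sets.
\end{enumerate}
In particular $Z\neq\emptyset$, and every even set in $Z$ must be paid for by the parity balance.

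First I handle $\delta=2$. The faces of size at least one are the two singletons and possibly $S$ itself. The Euler condition forces either $Z=\{S\}$ (when $S$ is a face) or $Z$ consisting of one singleton. In the first case $Z$ contributes $-2$, and $U\setminus Z$ contains at most the two singletons, contributing at least $-2$; the total is at least $-4$. In the second case the total is positive or small, and I just check that it stays above $-4$.

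For $\delta=3$ the Euler condition excludes $Z=\{S\}$, since $S$ then contributes $-1$ to \eqref{eq:Z-euler}. The candidates are then limited, for example:
\begin{itemize}
\item a single maximal edge;
\item $S$ together with two of its edges;
\item one vertex.
\end{itemize}
I verify in each case that the even sets in $Z$ contribute at least $-2$ net of the compensating odd sets. Adding at most three singletons in $U\setminus Z$ then gives at least $-5$.

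The main obstacle is $\delta=4$, where the flag complexes on four vertices and their upper sets are more numerous. I would organize this case by the largest even set in $Z$. If $S\in Z$ is a face, the complex is the full simplex, and $S$ contributes $-4$. Any further even set in $Z$ forces odd sets to keep the Euler sum at $1$, and those odd sets contribute positively. The cheapest option is therefore $Z=\{S\}$. The remaining negative mass then comes from odd sets in $U\setminus Z$: at most four $3$-sets and four singletons. However, a singleton in $U$ forces all of its supersets into $U$ (upper set), and the even supersets contribute $+1$ each. A bookkeeping inequality should then show that the best possible outcome is the four triples alone, giving $-4-4=-8$. Equality forces exactly this configuration, which is the stated characterization.

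If instead the largest even set in $Z$ is an edge, then each even set in $Z$ contributes only $-2$. The Euler condition together with the upper-set property bounds the number of such edges in terms of the odd sets (maximal triangles or isolated vertices) present. A direct count should then give a total strictly greater than $-8$, which also rules out equality in these configurations.
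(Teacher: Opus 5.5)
Your framework matches the paper's: the signs of the terms in \eqref{eq:derivative-ZU}, upper sets $Z\subseteq U$, and the alternating-sum condition \eqref{eq:Z-euler}. However, the counts that make up the lemma are either wrong as stated or left undone. The recurring flaw is that you bound the $Z$-part and the $U\setminus Z$-part separately, although they compete for the same faces.

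For $\delta=3$, your claim that $Z$ contributes at least $-2$ is false. If $H[S]$ has the single edge $\{1,2\}$, then $K=\operatorname{Ind}(H[S])$ is the path $1$--$3$--$2$. The family $Z=\{\{1,3\},\{2,3\},\{1\}\}$ is an upper set with alternating sum $1$ and contribution $-2-2+1=-3$. Adding ``at most three singletons'' would then give only $-6$. What saves the bound is that a singleton in $Z$ is not available for $U\setminus Z$.

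The clean bookkeeping, as in the paper, has two steps. First, eliminate $z_2$ via \eqref{eq:Z-euler}, where $z_i$ is the number of $i$-sets in $Z$. For $z_4=0$ this gives $z_2=1+z_1+z_3$, so the $Z$-part is $z_1-2z_2+3z_3=-2-z_1+z_3$. Second, charge $-1$ only to odd faces \emph{outside} $Z$, i.e.\ at least $-(\delta-z_1)-(n_3-z_3)$, where $n_3$ is the number of triangles of $K$. The $z_1$ cancels, and $\delta=2,3$ follow at once (when $K$ is the $2$-simplex, $S\in Z$, so $z_3=n_3=1$).

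Two minor points. The one-vertex configurations you list for $\delta=2,3$ violate \eqref{eq:Z-euler}, since their sum is $-1$. Also, $D\geq 4$ is not a hypothesis of the lemma, which is used precisely to derive $D\le 2$ or $D\le 4$. This is harmless: $\emptyset\in U$ only when $D\le 1$, and then it contributes $+1$.

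For $\delta=4$ in the simplex case, your parity argument does not show that $Z=\{S\}$ is cheapest. With $z_4=1$ the relation is $z_2=z_1+z_3$, so the $Z$-part is $-4+z_3-z_1$. A $2$-set balanced by a singleton contributes $-2+1=-1$, not something positive. You need upper closure, which leaves only the five patterns listed in the paper's proof, so $z_3\geq 3$ whenever $Z\neq\{S\}$. You must then bound the \emph{total} for each such $Z$, not just its $Z$-part, e.g.\ by $(-4+z_3-z_1)-(4-z_1)-(4-z_3)=-12+2z_3\geq -6$. For $Z=\{S\}$, your observation that each singleton in $U$ drags in three edges at $+1$ is correct. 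It gives $-8$, with exactly the stated equality configuration.

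The non-simplex case, which you leave at ``a direct count should give $>-8$'', is not automatic. The count above gives $-6-n_3+2z_3$, which equals $-8$ when $n_3=2$ and $z_3=0$. You need flagness to get $n_3\leq 2$: three triangles on four vertices force all six edges, hence the $4$-face. You also need a further argument to exclude equality. For instance, when $n_3=2$ every edge of $K$ lies in a triangle, so $z_3=0$ forces $z_2=0$, contradicting $z_2=1+z_1$. The paper instead observes that four singletons in $U$ would drag all edges of $K$ into $U$ with positive sign.
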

\begin{proof}
    Write $K=\operatorname{Ind}(H[S])$. We repeatedly use that $Z\subseteq U\subseteq K$ are upper sets and that \eqref{eq:Z-euler} holds.

    Suppose first that $\delta=2$. If $K$ has no two-element face, then every nonempty face has size one, and the alternating sum in \eqref{eq:Z-euler} cannot be $1$. Thus $K$ contains the two-element face $S$. Since $Z$ is an upper set and has alternating sum $1$, it follows that $Z=\{S\}$. Its contribution to $I'_H(-1)$ is $-2$. The only further negative contributions can come from the at most two one-element faces in $U\setminus Z$. Therefore
    $$I'_H(-1)\geq -2-2=-4.$$

    Now suppose that $\delta=3$. If $K$ has no three-element face, let $z_i$ be the number of $i$-element faces in $Z$. Then \eqref{eq:Z-euler} gives $-z_1+z_2=1$. The contribution of $Z$ to $I'_H(-1)$ is $z_1-2z_2$. The only additional negative contributions can come from one-element faces in $U\setminus Z$, of which there are at most $3-z_1$. Hence
    $$I'_H(-1)\geq z_1-2z_2-(3-z_1)=2z_1-2z_2-3=-5.$$
    If $K$ has a three-element face, then $K$ is the full simplex on three vertices. In this case $Z$ must contain the three-element face; otherwise no upper set can satisfy \eqref{eq:Z-euler}. Letting $z_i$ again count the $i$-element faces of $Z$, we have $z_3=1$ and
    $$-z_1+z_2-z_3=1,\qquad\text{so}\qquad z_2=z_1+2.$$
    The contribution from $Z$ is $z_1-2z_2+3$, and the possible additional negative contribution from one-element faces in $U\setminus Z$ is at least $-(3-z_1)$. Thus 
    $$I'_H(-1)\geq z_1-2z_2+3-(3-z_1)=2z_1-2z_2=-4.$$
    So $I'_H(-1)\geq -5$ for $\delta=3$.

    Finally suppose that $\delta=4$. If $K$ has no three-element face, then \eqref{eq:Z-euler} reads $-z_1+z_2=1$. Arguing as above, the only additional negative contributions can come from one-element faces, and hence
    $$I'_H(-1)\geq z_1-2z_2-(4-z_1)=2z_1-2z_2-4=-6.$$

    Assume next that $K$ has three-element faces but no four-element face. Since $K$ is the independence complex of a graph, it is flag. Thus on four vertices it can have at most two three-element faces; three such faces would force all six edges and hence, by flagness, the four-element face. Let $n_3\leq2$ be the number of three-element faces of $K$. Then
    $$-z_1+z_2-z_3=1,\qquad\text{so}\qquad z_2=1+z_1+z_3.$$
    The contribution from $Z$ is $z_1-2z_2+3z_3$. The only negative faces outside $Z$ have size one or three, so
    \begin{align*}
        I'_H(-1)
        &\geq z_1-2z_2+3z_3-(4-z_1)-(n_3-z_3)\\
        &=2z_1-2z_2+4z_3-4-n_3\\
        &=2z_3-6-n_3\\
        &\geq -8.
    \end{align*}
    Equality in this estimate would require $z_3=0$, $n_3=2$, all four one-element faces and both three-element faces to lie in $U\setminus Z$, and no positive two-element face outside $Z$ to lie in $U$. This is impossible because $U$ is an upper set: if all four one-element faces lie in $U$, then all two-element faces of $K$ also lie in $U$, contributing positively. Hence equality does not occur in this case.

    It remains to treat the case where $K$ is the full simplex on four vertices. The upper set $Z$ is determined by its minimal faces. A direct enumeration of upper sets in the Boolean lattice on four vertices satisfying \eqref{eq:Z-euler} gives exactly the following five possibilities for the numbers $(z_1,z_2,z_3,z_4)$:
    $$(0,0,0,1),\quad(0,3,3,1),\quad(0,4,4,1),\quad(1,5,4,1),\quad(2,6,4,1).$$
    Here, for example, $(0,3,3,1)$ means that $Z$ contains no one-element faces, three two-element faces, three three-element faces, and the top face. This list is obtained as follows. If $Z$ has no one-element face, it is generated by some two-element faces or by the top face alone; the alternating sum condition leaves only the first three patterns displayed above. If $Z$ has one or two one-element faces, upper closure and \eqref{eq:Z-euler} force the last two patterns. Three or four one-element faces cannot satisfy \eqref{eq:Z-euler}.

    For these five patterns, the smallest possible value of $I'_H(-1)$, over all upper sets $U\supseteq Z$, is respectively
    $$-8,-3,-2,-3,-4.$$
    This is checked directly from \eqref{eq:derivative-ZU}: to minimize the derivative, one includes as many odd-dimensional faces as upper-set closure permits, while avoiding even-dimensional faces when possible. Thus $I'_H(-1)\geq -8$, and equality occurs only in the first pattern, namely
    $$Z=\{S\}$$
    and $U$ consists of $S$ and all four three-element subsets of $S$. This proves the lemma.
\end{proof}

\begin{proposition}\label{prop:small-defect-exclusion}
    Let $H$ be a finite graph such that
    $$I_H(u)=I_H(-1-u),\qquad\alpha(H)=D,\qquad\tau(H)=\delta.$$
    If $2\leq \delta\leq 4$, then no associated full reciprocal-radius polynomial can be an irreducible Salem polynomial.
\end{proposition}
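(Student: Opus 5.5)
The plan is to play the symmetry \eqref{eq:ind-symmetry} against Lemma~\ref{lem:small-defect-derivative}. Since $I_H(u)=I_H(-1-u)$, we have $I_H'(-1)=-I_H'(0)=-|V(H)|=-(D+\delta)$. The lemma gives $I_H'(-1)\geq -4,-5,-8$ for $\delta=2,3,4$, hence
$$D\leq 2\ (\delta=2),\qquad D\leq 2\ (\delta=3),\qquad D\leq 4\ (\delta=4).$$
A Salem minimal polynomial has even degree $D\geq 4$. This rules out $\delta=2,3$ immediately. For $\delta=4$ it forces $D=4$, and then $I_H'(-1)=-8$. So we are in the equality case of Lemma~\ref{lem:small-defect-derivative}.

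In the equality case the structure is almost determined. The set $S$ is independent in $H$, and $q(S)=0$. Each of the four triples has $q=1$, and every pair and singleton lies outside $U$, so has $q\geq 2$. The bound \eqref{eq:q-size-bound} gives $q(\text{pair})=2$, $q(\{s\})\in\{2,3\}$ and $q(\emptyset)=4$. Let $m$ be the number of singletons with $q=3$. Then \eqref{eq:ind-cover-decomposition} gives
$$I_H(u)=(1+u)^4+m\,u(1+u)^3+(4-m)\,u(1+u)^2+6u^2(1+u)^2+4u^3(1+u)+u^4 .$$
I would impose $I_H(u)=I_H(-1-u)$ on this explicit family. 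Since the degree-$4$ part and the values at $0,-1$ are already consistent, only one or two coefficient conditions remain. These should pin down $m$, or leave a short list of values.

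For each surviving $m$, transform back via \eqref{eq:inverseA} to a quartic
$$M(x)=x^4-4x^3+a_2x^2-4x+1.$$
Write $R(y)=y^2-4y+(a_2-2)$ as in Proposition~\ref{prop:SalemCriterion}. The Salem criterion requires $R(2)<0<R(-2)$, which means $-10<a_2<6$. I would compute $a_2$ in each case and show one of two things. Either $a_2$ lies outside this window, so there are no roots on the unit circle. Or the polynomial is reducible, for instance divisible by $x-1$, $x+1$, or a cyclotomic quadratic, which we would detect through $M(\pm1)$ or a direct factorization.

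The expected main obstacle is this last step. The symmetry constraint might leave a value of $m$ whose quartic is not obviously excluded. If so, I would fall back on the top-coefficient bound \eqref{eq:topbound}: the leading coefficient of $I_H$ is $M(1\cdot(-1))$-type data, namely $(-1)^4M(-1)=1$ here, which is far below $2^4$. Combining this with the explicit factorization over $\Z$ should certify that the surviving quartic is reducible or non-Salem. This would finish the proof of Proposition~\ref{prop:small-defect-exclusion}.
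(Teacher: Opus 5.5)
Your reduction to the equality case $\delta=D=4$ agrees with the paper. The last step, however, cannot work as planned, because the family you parametrize by $m$ is too large. Expanding it gives $I_H(u)=1+8u+(20+m)u^2+(24+2m)u^3+(12+m)u^4$. In the variable $w=u+\tfrac12$, the coefficients of $w^3$ and $w$ vanish identically in $m$. So the symmetry $I_H(u)=I_H(-1-u)$ holds for every $m\in\{0,\dots,4\}$ and pins down nothing.

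Transforming back gives $M(x)=x^4-4x^3+(2+m)x^2-4x+1$, with $R(y)=y^2-4y+m$. For $m=0,3,4$ this is reducible, with factors $x^2+1$, $x^2-x+1$, and $(x-1)^4$ respectively. For $m=1,2$, however, you get $x^4-4x^3+3x^2-4x+1$ and $x^4-4x^3+4x^2-4x+1$, which are irreducible Salem polynomials:
\begin{itemize}
\item $R$ has roots $2\pm\sqrt3$, resp. $2\pm\sqrt2$, one in $(-2,2)$ and one in $(2,\infty)$;
\item $M(\pm1)\neq0$;
\item a factorization $(x^2+ax+1)(x^2+cx+1)$ would need integer roots of $t^2+4t+m$;
\item $(x^2+ax-1)(x^2+cx-1)$ would force the $x^3$ and $x$ coefficients to be opposite.
\end{itemize}
Your fallback does not rescue this. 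The leading coefficient of $I_H$ is $(-1)^4M(-1)=12+m$, not $1$, and $12+m\leq 2^4$, so \eqref{eq:topbound} is satisfied. Indeed, if these configurations were realizable they would give $a_1=-4$, contradicting the sharp bound.

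The missing idea is that $q$ is not an arbitrary decreasing function on the faces of $\operatorname{Ind}(H[S])$. It is induced by the four vertices of $T$: writing $C_t\subseteq S$ for the set of non-neighbours of $t$, one has $q(A)=\#\{t\in T: A\subseteq C_t\}$. Then $q(S)=0$ says no $C_t$ equals $S$. Hence $q(A)=1$ for a triple $A$ says that $A$ equals exactly one $C_t$, and since $|T|=4$, the $C_t$ are precisely the four triples. Each singleton then lies in three of them. This forces $m=4$, $q(A)=4-|A|$, $I_H=(1+2u)^4$ and $M=(x-1)^4$, which is how the paper concludes. The values $m<4$ in your family are exactly the non-realizable configurations, and no coefficient check on $M$ can eliminate them.
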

\begin{proof}
    The symmetry gives
    $$I'_H(-1)=-(D+\delta).$$
    By Lemma \ref{lem:small-defect-derivative}, if $\delta=2$, then $D+2\leq4$, hence $D\leq2$. If $\delta=3$, then $D+3\leq5$, hence $D\leq2$. Neither case can give a Salem polynomial, whose degree is at least four.

    If $\delta=4$, then $D+4\leq8$, hence $D\leq 4$. Again $D\leq2$ cannot give a Salem polynomial. Since a Salem polynomial has even degree at least four, the remaining case is $D=4$. Then equality must hold in Lemma \ref{lem:small-defect-derivative}. Therefore $\operatorname{Ind}(H[S])$ is the full simplex on four vertices, $Z=\{S\}$, and $U$ consists of $S$ and the four three-element subsets of $S$.

    Thus
    $$q(S)=0,\qquad q(A)=1\text{ for } |A|=3.$$
    Moreover, if $|A|=2$, then $A\notin U$, so $q(A)>1$; by \eqref{eq:q-size-bound}, $q(A)+2\leq4$, hence $q(A)=2$.

    For $t\in T$, define
    $$C_t=\{s\in S: t\text{ is not adjacent in }H\text{ to }s\}.$$
    Then
    $$q(A)=\#\{t\in T:A\subseteq C_t\}.$$
    The conditions $q(S)=0$ and $q(A)=1$ for each three-element subset $A\subset S$ say that every three-element subset of $S$ is contained in exactly one of the sets $C_t$, and no $C_t$ is all of $S$. Since $|T|=4$, the multiset $\{C_t:t\in T\}$ must be precisely the four three-element subsets of $S$. Consequently
    $$q(A)=4-|A|$$
    for every $A\subseteq S$, and hence
    \begin{align*}
        I_H(u)
        &=\sum_{A\subseteq S}u^{|A|}(1+u)^{4-|A|}\\
        &=(1+2u)^4.
    \end{align*}
    The inverse transform gives
    $$(x+1)^4(1+2(-1/(x+1)))^4=(x-1)^4,$$
    which is not irreducible and not Salem. Therefore no case with $2\leq\delta\leq4$ yields an irreducible Salem polynomial.
\end{proof}

\begin{theorem}\label{thm:sharp-first-coeff}
    Let $\Gamma$ be a finite graph and suppose that its full reciprocal-radius polynomial $P_\Gamma(x)$ is itself an irreducible Salem polynomial. Write
    $$P_\Gamma(x)=x^D+a_1x^{D-1}+\cdots+a_1x+1.$$
    Then
    $$a_1\leq -5.$$
    The bound is sharp.
\end{theorem}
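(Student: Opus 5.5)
The bound itself will come from combining the two obstructions already proved. Put $M=P_\Gamma$ and $H=\overline\Gamma$, so that $A_M=I_H$ by \eqref{eq:AMequalsI}, and let $\delta=\tau(H)=-a_1$ be the defect, as in \eqref{eq:delta}. Since $M$ is a Salem minimal polynomial, Proposition~\ref{prop:elementary-obstruction} already gives $\delta\geq2$. Reciprocity of $M$ is equivalent to the symmetry $I_H(u)=I_H(-1-u)$ by \eqref{eq:ind-symmetry}, and $\alpha(H)=D=\deg M$. Therefore Proposition~\ref{prop:small-defect-exclusion} applies verbatim and excludes $\delta\in\{2,3,4\}$. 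Hence $\delta\geq5$, which is exactly $a_1\leq-5$. This part is purely bookkeeping; the real work was done in Lemma~\ref{lem:small-defect-derivative}.

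For sharpness I will use the construction of Section~\ref{sec:construction} with $d=2$ and parameter $p=1$. Although Lemma~\ref{lem:eisenstein} does not cover $p=1$, the graph $\Gamma_{2,1}$ still makes sense: it is $O_4$ together with one extra vertex attached to a single vertex of $O_4$. Its clique polynomial is
$$(1+2u)^4+u(1+u)=1+9u+25u^2+32u^3+16u^4.$$
By Lemma~\ref{lem:realization}, or by a direct computation,
$$P_{\Gamma_{2,1}}(x)=(x-1)^4-x(x+1)^2=x^4-5x^3+4x^2-5x+1,$$
so $a_1=-5$.

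It then remains to check that this quartic is an irreducible Salem polynomial, since irreducibility no longer follows from Eisenstein's criterion. The associated polynomial is $R(y)=y^2-5y+2$, with roots $(5\pm\sqrt{17})/2\approx 4.56$ and $0.44$. So there is one root in $(2,\infty)$ and one in $(-2,2)$. For irreducibility I argue directly. $R$ is irreducible over $\mathbb Q$ because $17$ is not a square, so the two values $y_1,y_2$ are Galois conjugate. Consequently the conjugates of the real root $\tau>1$ (lying over $y_1$) include a root lying over $y_2$, which sits on the unit circle. So $\tau$ is not quadratic, and it is not rational because $\pm1$ are not roots. Hence the minimal polynomial of $\tau$ has degree $4$ and equals $M$. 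Proposition~\ref{prop:SalemCriterion} then shows that $M$ is Salem. This irreducibility check is the only point requiring a little care; everything else is immediate from the earlier results.
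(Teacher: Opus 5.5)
Your proof of $a_1\le -5$ is exactly the paper's: Proposition~\ref{prop:elementary-obstruction} gives $\delta\ge 2$, and Proposition~\ref{prop:small-defect-exclusion} removes $\delta=2,3,4$.

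Your sharpness witness is also the paper's graph. The paper's $H$ consists of the matching $pa,qc,rd,se$ together with a vertex $b$ adjacent to every other vertex except $p$. That is precisely the complement of $O_4$ with one pendant vertex.

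However, you misidentify this graph: it is not $\Gamma_{2,1}$. Since $B_2(z)=z+1$ has $b_{2,0}=b_{2,1}=1$, the construction with $d=2$ and parameter $1$ also adds a second vertex whose link is $K_1*O_2$. Lemma~\ref{lem:realization} then gives
\[
P_{\Gamma_{2,1}}(x)=(x-1)^4-x\bigl[(x+1)^2+(x-1)^2\bigr]=x^4-6x^3+6x^2-6x+1,
\]
with $a_1=-6$, not $-5$. Your formula $(x-1)^4-x(x+1)^2$ keeps only the $\ell=0$ term.

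The argument survives because your direct computation is correct for the graph you actually describe. Its clique polynomial is $(1+2u)^4+u(1+u)$, and its growth polynomial is $x^4-5x^3+4x^2-5x+1$. So drop the appeal to $\Gamma_{2,1}$ and Lemma~\ref{lem:realization}, and present the example as an explicit graph. Described from the $\Gamma$ side, it is arguably more transparent than the paper's vertex-cover presentation of $H$.

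Where you genuinely differ from the paper is the irreducibility check. The paper does it by hand:
\begin{itemize}
\item there are no roots $\pm1$, since $P_\Gamma(1)=-4$ and $P_\Gamma(-1)=16$;
\item a factorization into monic integer quadratics with constant terms $1$ would need $a+b=-5$ and $ab=2$, which is impossible over $\Z$;
\item a factorization with constant terms $-1$ would force the $x^3$ and $x$ coefficients to be opposite, but both are $-5$.
\end{itemize}
Your Galois argument is more conceptual: irreducibility of $R(y)=y^2-5y+2$ forces $\tau$ to have a conjugate $\tau'$ on the unit circle. It is the standard way to certify Salem polynomials whose trace polynomial is irreducible, while the paper's check is more elementary.

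As written, though, your argument skips degree $3$: ``not quadratic and not rational'' does not exclude a cubic minimal polynomial for $\tau$. You can close this in either of two ways.
\begin{itemize}
\item A cubic minimal polynomial would leave a linear factor of $M$, hence a root $\pm1$, which does not occur.
\item An automorphism taking $\tau'$ to $\tau$ takes $\overline{\tau'}=1/\tau'$ to $1/\tau$, so all four roots of $M$ are conjugate.
\end{itemize}
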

\begin{proof}
    Let $H=\overline\Gamma$. Since $P_\Gamma$ is reciprocal and equals its own Salem minimal polynomial, the transform $A_{P_\Gamma}(u)$ is equal to $I_H(u)$. The defect of $H$ is
    $$\delta=\tau(H)=-a_1.$$
    Proposition \ref{prop:elementary-obstruction} first gives $a_1\leq -2$, equivalently $\delta\geq2$. Proposition \ref{prop:small-defect-exclusion} then excludes $\delta=2,3,4$. Hence $\delta\geq5$, or equivalently $a_1\leq -5$.

    We now show that the bound is attained. Let $H$ be the graph with vertex set
    $$S\sqcup T=\{a,b,c,d,e\}\sqcup\{p,q,r,s\}.$$
    The set $T$ is independent. Inside $S$, put the star centered at $b$, with edges
    $$ba,\quad bc,\quad bd,\quad be.$$
    Between $S$ and $T$, put the edges
    $$pa,\quad qb,\quad qc,\quad rb,\quad rd,\quad sb,\quad se.$$
    Finally let $\Gamma=\overline H$.
    Then $C_\Gamma(u)=I_H(u)$. We compute $I_H(u)$ by first choosing the part of an independent set lying in $S$. The independent subsets of $S$ are: the empty set; the four singleton leaves; the singleton $\{b\}$; the six pairs of leaves; the four triples of leaves; and the set of all four leaves. Therefore
    \begin{align*}
        I_H(u)&=(1+u)^4+4u(1+u)^3+u(1+u)+6u^2(1+u)^2+4u^3(1+u)+u^4\\
        &=1+9u+25u^2+32u^3+16u^4.
    \end{align*}
    Hence
    \begin{align*}
        P_\Gamma(x)&=(x+1)^4 I_H\left(-\frac1{x+1}\right)\\
        &=(x+1)^4-9(x+1)^3+25(x+1)^2-32(x+1)+16\\
        &=x^4-5x^3+4x^2-5x+1.
    \end{align*}
    Thus $a_1=-5$.

    It remains to verify that this polynomial is Salem. It is reciprocal, and with $y=x+x^{-1}$,
    $$x^{-2}P_\Gamma(x)=y^2-5y+2.$$
    The roots of $y^2-5y+2$ are
    $$\frac{5-\sqrt{17}}2\in(-2,2),\qquad\frac{5+\sqrt{17}}2\in(2,\infty).$$
    Thus, by Proposition~\ref{prop:SalemCriterion}, $P_\Gamma$ is Salem once irreducibility is verified. There is no linear factor because $P_\Gamma(1)=-4$ and $P_\Gamma(-1)=16$. If it factored into monic integer quadratics with constant terms $1$, then
    $$P_\Gamma(x)=(x^2+ax+1)(x^2+bx+1),$$
    forcing $a+b=-5$ and $ab=2$, impossible over $\Z$. If it factored into monic integer quadratics with constant terms $-1$, then the coefficients of $x^3$ and $x$ would be opposite, whereas both are $-5$. Hence $P_\Gamma$ is irreducible. Therefore it is an irreducible Salem polynomial, and the bound is sharp.
\end{proof}
Combining Proposition \ref{prop:elementary-obstruction} and Theorem \ref{thm:sharp-first-coeff} proves Theorem \ref{thm:main2}.

\section{Lehmer-type obstructions}\label{sec:lehmer}
We apply the inverse transform to Lehmer's polynomial in two ways. First, the coefficient obstruction rules out $\mathcal L(x)$ itself. Second, the same vertex-cover count rules out every product $\mathcal L(x)(x-1)^m$. Both exclusions come from the independence-polynomial transform and elementary vertex-cover counts. This should be distinguished from McMullen's Lehmer lower bound for spectral radii in the geometric representation of a Coxeter group \cite{McMullen}; here the object is the full reciprocal-radius polynomial governing the standard spherical growth series of a right-angled Coxeter group.

\subsection{Lehmer's polynomial is not strongly primitive}
Let $\mathcal L(x)$ be Lehmer's polynomial from \eqref{eq:Lehmer}. It has degree 10 and coefficient $a_1=1$ at $x^9$. Therefore Theorem \ref{thm:sharp-first-coeff} immediately gives the following.

\begin{corollary}\label{cor:LehmerNotStrong}
    Lehmer's polynomial $\mathcal L(x)$ is not the full reciprocal-radius growth polynomial $P_\Gamma(x)$ of any right-angled Coxeter group.
\end{corollary}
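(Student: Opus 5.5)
The plan is to argue by contradiction and feed Lehmer's polynomial directly into the earlier obstruction results. Suppose $\mathcal L(x)=P_\Gamma(x)$ for some finite graph $\Gamma$. We first check the hypotheses of Theorem~\ref{thm:sharp-first-coeff}. The polynomial $\mathcal L$ is monic and reciprocal of degree $D=10$. It is irreducible over $\Z$; this is the standard fact that it is the minimal polynomial of Lehmer's number $\lambda_0$. It is a Salem polynomial, since its roots are $\lambda_0$, $\lambda_0^{-1}$, and eight roots on the unit circle. So if $\mathcal L=P_\Gamma$, then the full reciprocal-radius polynomial of $\Gamma$ is itself an irreducible Salem polynomial, and Theorem~\ref{thm:sharp-first-coeff} forces its $x^{D-1}$ coefficient to satisfy $a_1\leq -5$.

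Reading off \eqref{eq:Lehmer}, the coefficient of $x^9$ in $\mathcal L$ is $a_1=+1$. This contradicts $a_1\leq-5$, so no such $\Gamma$ exists.

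Alternatively, Proposition~\ref{prop:elementary-obstruction} already suffices and avoids any dependence on the finer small-defect analysis. By \eqref{eq:linearCoeffA}, we have $[u]A_{\mathcal L}(u)=D-a_1=9$. If $A_{\mathcal L}=I_{\overline\Gamma}$, then by \eqref{eq:delta} the complement graph would have vertex cover number $\tau=-a_1=-1<0$, which is absurd. I would mention this as the more elementary route.

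There is essentially no obstacle in this argument. The only points to state carefully are irreducibility and the Salem property of $\mathcal L$, which I will cite as classical rather than reprove, and that "full reciprocal-radius polynomial" means $P_\Gamma$ before any cancellation, exactly as in Theorem~\ref{thm:sharp-first-coeff}.
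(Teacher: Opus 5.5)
Your argument is correct and matches the paper's proof: if $\mathcal L=P_\Gamma$, then Theorem~\ref{thm:sharp-first-coeff} forces $a_1\leq -5$, which contradicts $a_1=1$. Your elementary alternative (linear coefficient $9$ of $A_{\mathcal L}$ against degree $10$, i.e.\ defect $-1$) is the same observation the paper makes right after its proof, and it does not even need the irreducibility or Salem property of $\mathcal L$.
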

\begin{proof}
    If $\mathcal L=P_\Gamma$, then the coefficient $a_1$ of $x^9$ would have to satisfy $a_1\leq -5$ by Theorem \ref{thm:sharp-first-coeff}. But for Lehmer's polynomial this coefficient is $a_1=1$. Hence such a graph $\Gamma$ cannot exist.
\end{proof}

It is useful to see the same obstruction in the transformed variable. A direct calculation gives
\begin{align}\label{eq:ALehmer}
    A_{\mathcal{L}}(u)=&1+9u+36u^2+85u^3+132u^4+142u^5\notag\\
    &+108u^6+58u^7+22u^8+5u^9+u^{10}.
\end{align}
If this were an independence polynomial, it would describe a graph with $9$ vertices and an independent set of size $10$, which is impossible. This is exactly the defect obstruction $\delta=-a_1=-1$ in concrete form.

\subsection{Suspension multiples of Lehmer's polynomial}
Joining a defining graph with $O_m$ multiplies the full reciprocal-radius polynomial by $(x-1)^m$. Since joining with $O_m$ multiplies $P_\Gamma$ by $(x-1)^m$, one might try to realize Lehmer's polynomial only after such a join. For Lehmer's polynomial this never works.

\begin{proposition}\label{prop:noLehmerSuspension}
    For every $m\geq0$, the polynomial $\mathcal L(x)(x-1)^m$, where $\mathcal L$ is Lehmer's polynomial \eqref{eq:Lehmer}, is not a right-angled Coxeter full reciprocal-radius growth polynomial.
\end{proposition}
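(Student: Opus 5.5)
The plan is to run the vertex-cover count of Section~\ref{sec:obstruction} on the transformed polynomial, using that the inverse transform is multiplicative. Put $M_m(x)=\mathcal L(x)(x-1)^m$, which is monic of degree $D=10+m$. Suppose $M_m=P_\Gamma$ for some graph $\Gamma$. Since $P_\Gamma$ is monic of degree $r=\omega(\Gamma)$, we get $r=D$. By \eqref{eq:AMequalsI}, $A_{M_m}(u)=I_H(u)$ with $H=\overline\Gamma$, and the $u^D$ coefficient shows $\alpha(H)=D$.

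First step: observe that for monic $M,N$ of degrees $D_1,D_2$, directly from the definition,
$$A_{MN}(u)=(-u)^{D_1+D_2}M\!\left(-1-\tfrac1u\right)N\!\left(-1-\tfrac1u\right)=A_M(u)A_N(u).$$
For $N=(x-1)^m$ we have $A_N(u)=(-u)^m(-2-1/u)^m=(1+2u)^m$, consistent with $C_{O_m}$. Hence
$$I_H(u)=A_{\mathcal L}(u)\,(1+2u)^m,$$
with $A_{\mathcal L}$ as in \eqref{eq:ALehmer}.

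Second step: read off two coefficients.
\begin{itemize}
\item The linear coefficient is $9+2m$, so $|V(H)|=9+2m$. Therefore $\tau(H)=|V(H)|-\alpha(H)=(9+2m)-(10+m)=m-1$.
\item The leading coefficient is $1\cdot 2^m=2^m$. This is the number of maximum independent sets of $H$.
\end{itemize}

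Third step: conclude.
\begin{itemize}
\item If $m=0$, the vertex cover number would be $-1$, which is impossible; this is just Corollary~\ref{cor:LehmerNotStrong}.
\item If $m\ge1$, the counting argument preceding Proposition~\ref{prop:elementary-obstruction} applies. Each maximum independent set is determined by its intersection with a minimum vertex cover, so there are at most $2^{\tau(H)}=2^{m-1}$ of them. This contradicts the leading coefficient $2^m$.
\end{itemize}

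The only real care needed is bookkeeping of degrees: $P_\Gamma$ must be taken in its full degree $r$, so that $A$ is computed with $D=r$ and no factor is dropped. Beyond that, no step looks like a genuine obstacle. The argument does not use the Salem property at all, only the coefficients $9$ and $1$ of $A_{\mathcal L}$ in its linear and top terms.
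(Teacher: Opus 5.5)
Your proposal is correct and matches the paper's proof step for step. You use multiplicativity to get $A_{\mathcal L}(u)(1+2u)^m$, read off $|V(H)|=9+2m$, $\alpha(H)=10+m$ and $\tau(H)=m-1$, and then contradict the $2^{m-1}$ bound on maximum independent sets with the leading coefficient $2^m$; your explicit check that $\omega(\Gamma)=D$, so that no factor is dropped, is a bookkeeping point the paper leaves implicit.
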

\begin{proof}
    The transform is multiplicative under products of full reciprocal-radius polynomials. Since
    $$A_{(x-1)^m}(u)=(1+2u)^m,$$
    we have
    $$A_{\mathcal L(x)(x-1)^m}(u)=A_{\mathcal L}(u)(1+2u)^m.$$
    Using \eqref{eq:ALehmer}, this polynomial has degree $10+m$, linear coefficient $9+2m$, and leading coefficient $2^m$. If it were an independence polynomial $I_H(u)$, then
    $$|V(H)|=9+2m,\qquad\alpha(H)=10+m.$$
    The vertex cover number would be
    $$\tau(H)=|V(H)|-\alpha(H)=m-1.$$
    For $m=0$ this is negative, impossible. Suppose $m\geq1$. The leading coefficient of an independence polynomial counts maximum independent sets. If a graph has vertex cover number $m-1$, then every maximum independent set is determined by its intersection with a fixed vertex cover of size $m-1$. Hence it has at most $2^{m-1}$ maximum independent sets. But the leading coefficient here is $2^m$. This contradicts $2^m\leq 2^{m-1}$. Therefore no such graph exists.
\end{proof}

\section{Concluding remarks}\label{sec:closing}
The two main parts of the paper point in opposite directions. The construction in Section \ref{sec:construction} shows that strongly primitive Salem growth rates are plentiful: every even degree at least four occurs, and in fact occurs infinitely often. The obstructions in Sections \ref{sec:obstruction} and \ref{sec:firstcoefficient} show that realizing a prescribed Salem polynomial is nevertheless highly constrained. The transform $M\mapsto A_M$ forces $A_M$ to be an independence polynomial, and the first coefficient of $M$ becomes the negative of the vertex cover number of the transformed graph, and the sharp analysis forces $a_1\leq -5$ in the strongly primitive Salem case.

Lehmer's polynomial illustrates this tension. Its Salem number is the smallest known Salem number, but its first coefficient has the wrong sign for a strongly primitive right-angled Coxeter realization. Multiplying by powers of $x-1$, which corresponds to joining with cross-polytope graphs, still fails by a maximum-independent-set count. Thus the abundance theorem (Theorem~\ref{thm:main1}) does not arise from the smallest known Salem polynomials; it comes from a different, explicitly graph-realized family.

The obstruction used here only exploits low vertex-cover defect. One can impose higher vertex-cover templates on the transform $A_M$, or seek sharper structural criteria for when a prescribed Salem polynomial can occur as a full right-angled Coxeter growth polynomial.

\end{document}